\documentclass[12pt]{article}
\usepackage[T1]{fontenc}
\usepackage[utf8]{inputenc}
\usepackage{lmodern}
\usepackage{amsmath,amssymb,amsthm,mathtools}
\usepackage{enumitem}
\usepackage[margin=1.15in]{geometry}
\usepackage[dvipsnames]{xcolor}
\usepackage{tikz}
\usetikzlibrary{arrows.meta,quotes}
\usepackage[colorlinks=true,linkcolor=blue,citecolor=blue,urlcolor=blue]{hyperref}
\providecommand{\U}[1]{\protect\rule{.1in}{.1in}}
\newcommand{\NN}{\mathbb{N}}
\newcommand{\ZZ}{\mathbb{Z}}

\newcommand{\Symm}{\mathfrak{S}}

\newcommand{\id}{\operatorname{id}}

\newcommand{\sgn}{\operatorname{sgn}}
\newcommand{\set}[1]{\left\{ #1 \right\}}
\newcommand{\abs}[1]{\left| #1 \right|}
\newcommand{\tup}[1]{\left( #1 \right)}
\newcommand{\ive}[1]{\left[ #1 \right]}
\theoremstyle{plain}
\newtheorem{theorem}{Theorem}[section]
\newtheorem{corollary}[theorem]{Corollary}
\newtheorem{proposition}[theorem]{Proposition}
\newtheorem{lemma}[theorem]{Lemma}
\theoremstyle{definition}
\newtheorem{definition}[theorem]{Definition}
\newtheorem{example}[theorem]{Example}
\newtheorem{remark}[theorem]{Remark}
\numberwithin{equation}{section}

\begin{document}

\title{Powers of matrices with all principal minors equal to $1$}
\author{Darij Grinberg, Hamesh M. Hamesh\thanks{This paper was written
by GPT-5.5 in June 2026 with some amount of strategic prompting.
It was then edited by DG to improve writing and generalize the results.
Later, GPT-5.6 Sol found relevant references and helped draw
Figure~\ref{fig.main-implications}.
\\
This work is in the public domain.}}
\date{1 August 2026}
\maketitle

\begin{abstract}
We say that a square matrix $A$ is \emph{$1$-principled} if all its
principal minors are equal to $1$.
We show that over any well-behaved ring, any power $A^m$
of a $1$-principled matrix $A$ is again $1$-principled.
Well-behaved rings include all reduced rings as well as
all quotients of commutative rings modulo integrally
closed ideals; in particular, all fields and all quotients of
$\mathbb{Z}$ are well-behaved.
We note that $m$ can be any integer, positive or negative.
This generalizes Problem B5 of the 2021 Putnam contest in multiple
directions.

Over arbitrary commutative rings, we identify a stronger property
that is always inherited by powers: We say that a matrix
$A = \left(a_{i,j}\right)_{i,j\in\left[n\right]}$
is \emph{$1$-nullcyclic} if all its diagonal entries are $1$ and
if all the cyclic products $a_{i_1, i_2} a_{i_2, i_3} \cdots a_{i_k, i_1}$
with $k>1$ and distinct $i_1,i_2,\ldots,i_k$ vanish.
We show that if $A$ is $1$-nullcyclic, then so is $A^m$ for any integer $m$.
Furthermore, every $1$-nullcyclic matrix is $1$-principled over
any commutative ring, while the converse holds if the ring is
well-behaved.

Along the way, we prove analogous results that don't require
the diagonal entries to be $1$. These are concerned with
\emph{principled matrices} (those whose principal minors
equal the respective products of diagonal entries) and
\emph{nullcyclic matrices} (those whose cyclic products
$a_{i_1, i_2} a_{i_2, i_3} \cdots a_{i_k, i_1}$
with $k>1$ and distinct $i_1,i_2,\ldots,i_k$ vanish);
their diagonal entries can be arbitrary.

A crucial auxiliary result, which holds for any $n\times n$-matrix $A$,
is that the cyclic products $a_{i_1, i_2} a_{i_2, i_3} \cdots a_{i_k, i_1}$
(with $k>1$ and distinct $i_1,i_2,\ldots,i_k$) are integral over the ideal
generated by the principal minors of $A$ minus the corresponding
products of diagonal entries of $A$.
\end{abstract}

\section{Definitions and the main result}

Throughout this note, rings are commutative, associative and unital. For
$n\in\mathbb{N}$, we set $\ive{n}:=\left\{  1,2,\ldots,n\right\}  $.

If $A=\tup{a_{i,j}}_{i,j\in\ive{n}}$ is an $n\times n$-matrix
over a ring $R$ and if $S\subseteq\ive{n}$, then $A_{S}$ denotes the
principal submatrix of $A$ with row and column set $S$. That is,
$A_{S}=\operatorname{sub}_{S}^{S}A$ in the notation of
\cite{GrinbergPrincipal}. For instance,
$\begin{pmatrix}
a & b & c\\
a^{\prime} & b^{\prime} & c^{\prime}\\
a^{\prime\prime} & b^{\prime\prime} & c^{\prime\prime}%
\end{pmatrix}_{\left\{  1,3\right\}  }
= \begin{pmatrix}
a & c\\
a^{\prime\prime} & c^{\prime\prime}
\end{pmatrix}$.
The \emph{principal minors} of an $n\times n$-matrix $A$ are its
minors $\det A_{S}$ for all $S\subseteq\ive{n}$. We
use the convention that the empty determinant is $1$.

Problem B5 of the 2021 Putnam contest \cite[Problem B5]{Putnam2021} asserts
that if $A\in\mathbb{Z}^{n\times n}$ is an integer matrix whose all principal
minors are odd, then all powers $A^{m}$ of $A$ have the same property. By
reducing the matrix modulo $2$, this can be restated as follows: If
$A\in\left(  \mathbb{Z}/2\right)  ^{n\times n}$ is a matrix over the
two-element field $\mathbb{Z}/2$ whose all principal minors equal $1$, then
all its powers $A^{m}$ have the same property. This suggests a generalization
to arbitrary commutative rings instead of $\mathbb{Z}/2$; however, this
generalization was disproved in \cite[\S 6]{GrinbergPrincipal} for $4\times
4$-matrices over a certain finite ring.\footnote{That said, a part of the
generalization is true over any $R$ (see \cite[Theorem 5.2]{GrinbergPrincipal}%
): If $A\in R^{n\times n}$ is a matrix whose all principal minors equal $1$,
then all \textbf{diagonal entries} of its powers $A^{m}$ equal $1$ as well
(even though some principal minors of $A^{m}$ may differ from $1$).}

In this note, we shall show that this generalization is nevertheless true if
we replace $\mathbb{Z}/2$ by any ring of the form $\mathbb{Z}/d$ with
$d \in \mathbb{Z}$. More generally, it is true over every quotient $D/I$
whose kernel ideal $I$ is integrally closed in $D$. This includes quotients of
Pr\"ufer domains by arbitrary ideals and quotients of normal domains by
principal ideals.

We introduce some terminology for the type of matrices we will study.

\begin{definition}
Let $R$ be a commutative ring, and let $A=\tup{a_{i,j}}
_{i,j\in\ive{n}}$ be an $n\times n$-matrix over $R$. We say that $A$ is
\emph{$1$-principled} if
\[
\det A_{S}=1 \qquad \text{for every }S\subseteq\ive{n}.
\]

\end{definition}

Note that the diagonal entries of an $n\times n$-matrix are its $1\times1$
principal minors. Hence, the diagonal entries of a $1$-principled matrix are
$1$.

Also, each $1$-principled matrix $A$ has determinant $\det A = 1$
(since $\det A = \det A_{\ive{n}}$ is a principal minor of $A$),
and thus is invertible (since a matrix with invertible determinant
is invertible). Thus, its powers $A^m$ are defined for all
integers $m$.

Our main result is the following theorem, which
recovers Problem B5 of the 2021 Putnam contest when we take
$D = \ZZ$ and $I = 2\ZZ$ and $m \geq 0$.

\begin{theorem}
\label{thm.main2} Let $D$ be a commutative ring, let $I$ be an integrally
closed ideal of $D$, and set $R:=D/I$. Let $A$ be a $1$-principled
matrix over $R$. Then $A^{m}$ is $1$-principled for every $m\in\mathbb{Z}$.
\end{theorem}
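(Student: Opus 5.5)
The plan follows the route suggested by the abstract. We pass through the notion of \emph{$1$-nullcyclic} matrices: those with all diagonal entries $1$ and all cyclic products $a_{i_1,i_2}a_{i_2,i_3}\cdots a_{i_k,i_1}$ vanishing for $k>1$ and distinct indices. The proof has three steps.
\begin{enumerate}
\item Over $R=D/I$ with $I$ integrally closed, every $1$-principled matrix is $1$-nullcyclic.
\item Over any commutative ring, powers $A^m$ ($m\in\ZZ$) of a $1$-nullcyclic matrix are $1$-nullcyclic.
\item Over any commutative ring, $1$-nullcyclic matrices are $1$-principled.
\end{enumerate}
Together these give the theorem.

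Step 3 is a direct Leibniz expansion. Write $\det A_S=\sum_{\sigma}\sgn\sigma\prod_{i\in S}a_{i,\sigma(i)}$ and factor each permutation of $S$ into disjoint cycles. Any $\sigma\neq\id$ has a nontrivial cycle, whose cyclic product vanishes, so that term is zero. Only the identity survives, giving $\det A_S=\prod_{i\in S}a_{i,i}=1$.

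Step 2 is combinatorial. For $m\geq0$, the entry $(A^m)_{i,j}$ is a sum over walks of length $m$ from $i$ to $j$. A cyclic product of $A^m$ along distinct $i_1,\ldots,i_k$ ($k>1$) therefore expands into a sum of products of entries of $A$ along closed walks that visit $i_1$ and $i_2\neq i_1$. Each monomial corresponds to a closed walk with at least one non-loop step. Such a closed walk, after deleting loops, contains a simple cycle of length $\geq2$: take the first repeated vertex and extract the segment between its two occurrences. Hence the monomial is divisible by a vanishing cyclic product. The diagonal entries of $A^m$ are sums over closed walks from $i$; the walk consisting only of loops contributes $1$, and every other walk contains a simple cycle of length $\geq2$ and contributes $0$. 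So $(A^m)_{i,i}=1$.

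For negative $m$, it suffices to treat $A^{-1}$. Write $A=I+N$ with $N$ having zero diagonal. By the walk argument, $N$ is nullcyclic with zero diagonal, and closed walks in $N$ of length $\geq1$ all contain simple cycles. We then want $N$ nilpotent. Any walk of length $\geq n$ repeats a vertex, so every entry of $N^n$ is a sum of monomials each containing a vanishing cyclic product, whence $N^n=0$. Thus $A^{-1}=\sum_{r=0}^{n-1}(-N)^r$, and the same walk argument, now with the signs $(-1)^r$ harmless, shows $A^{-1}$ is $1$-nullcyclic. Alternatively, apply the positive case to $A^{-1}=\sum_r(-N)^r$ after checking that $-N$ is nullcyclic.

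Step 1 is the main obstacle. It relies on the auxiliary result stated in the abstract, which I would prove first: for a generic matrix over $D$ (or any $D$), each cyclic product $c$ is integral over the ideal $J$ generated by the differences $\det A_S-\prod_{i\in S}a_{i,i}$. To apply it, lift the $1$-principled matrix $A$ over $R$ to a matrix $\tilde A$ over $D$. The differences $\det\tilde A_S-\prod_{i\in S}\tilde a_{i,i}$ then lie in $I$, because $\det A_S=1$ and each $a_{i,i}=1$, so the products of diagonal entries are also $1$. Hence $J\subseteq I$, and each lifted cyclic product is integral over $I$. Since $I$ is integrally closed, each lifted cyclic product lies in $I$, so it vanishes in $R$. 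Therefore $A$ is $1$-nullcyclic.

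For the auxiliary integrality claim, I would look for an explicit monic equation $c^N+j_1c^{N-1}+\cdots+j_N=0$ with $j_r\in J^r$. The first thing to try is induction on the cycle length. For a $2$-cycle the equation is immediate: $a_{12}a_{21}=a_{11}a_{22}-\det A_{\{1,2\}}\in J$. In general, expand $\det A_S$ for $S$ the support of the cycle; the resulting expression relates the full cycle products on $S$ to shorter cycle products (integral by induction) and to elements of $J$. Combining the different cyclic orders on $S$, perhaps via a symmetric-function argument over the set of Hamiltonian cycle products of $S$, should yield a monic polynomial for each individual cycle. Making this last step rigorous is where I expect the real difficulty.
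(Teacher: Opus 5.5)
Your three-step architecture is the same as the paper's. Steps~2 and~3 and the lifting argument in Step~1 are correct as written; they match Proposition~\ref{prop.strong-implies-principled2}, Lemma~\ref{lem.1-nullcyclic-nilpotent-part}, Proposition~\ref{prop.strong-integer-powers2} and Theorem~\ref{thm.integrally-closed-quotient-strong}. Drop the ``alternatively'' remark at the end of Step~2, though: knowing that each $(-N)^r$ is nullcyclic says nothing about their sum.

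\textbf{The gap.} The genuine gap is the auxiliary integrality theorem (Theorem~\ref{thm.cycle-weight-integral}). Everything rests on it, and you leave it open. Fix $S$ with $\abs{S}=r$ and let $K$ be the ideal generated by the weights of nontrivial cycles of length $<r$. Expanding $\det A_S$ only controls $e_1=\sum_H(-1)^{r-1}w_A(H)$, where $H$ runs over the Hamilton cycles on $S$: it gives $e_1\in L:=J(A)+K$. To get a monic equation for an individual $w_A(H)$, you need \emph{every} elementary symmetric function of the $(-1)^{r-1}w_A(H)$ to satisfy $e_t\in L^t$. Having merely $e_t\in L$ would not suffice: it only puts $w_A(H)$ in the radical of $L$. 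That is useless for an integrally closed but non-radical ideal such as $4\ZZ$. The determinant expansion gives you nothing at all about $e_t$ for $t\geq2$.

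\textbf{The missing idea.} What you need is a combinatorial lemma. If $H\neq H'$ are Hamilton cycles on $S$, then the multiset union of their arcs can be partitioned into at least three cycles, each of length $<r$. To see this:
\begin{enumerate}
\item Pick $v$ whose successors $a$ in $H$ and $b$ in $H'$ differ.
\item The $H'$-path from $a$ to $v$, closed up by the arc $(v,a)$, is a cycle of length $<r$. So is the $H$-path from $b$ to $v$, closed up by $(v,b)$. These two cycles are arc-disjoint.
\item The remainder is balanced and avoids $v$, so it decomposes into cycles of length $\leq r-1$.
\item There are $2r$ arcs in total, so there must be at least three cycles.
\end{enumerate}
Hence $w_A(H)w_A(H')\in K^3$. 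Any product of $t\geq2$ distinct Hamilton weights therefore lies in $K^{3\lfloor t/2\rfloor}\subseteq K^t\subseteq L^t$, which gives $e_t\in L^t$.

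\textbf{Closing the induction.} Saying ``shorter cycles are integral by induction'' is not enough. You need that $K\subseteq\overline{J(A)}$ implies $\overline{L}=\overline{J(A)}$. This uses that $\overline{J(A)}$ is an ideal and is itself integrally closed (Lemma~\ref{lem.integral-closure-basics}).
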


The notion of an integrally closed ideal will be recalled in
Section~\ref{sec.integral-closure}.

Our proof of Theorem~\ref{thm.main2} will be obtained by a combination
of several results involving other properties of matrices.
We shall introduce them one by one now.

\begin{definition}
An $n\times n$-matrix $A=\tup{a_{i,j}}_{i,j\in\ive{n}}$ is said
to be \emph{$1$-diagonal} if all its diagonal entries $a_{i,i}$
are $1$.
\end{definition}

Note that \cite[Theorem 5.2]{GrinbergPrincipal} says that if $A$
is a $1$-principled matrix and $m \in \NN$, then $A^m$ is
$1$-diagonal.

\begin{definition}
Let $R$ be a commutative ring, and let $A=\tup{a_{i,j}}
_{i,j\in\ive{n}}$ be an $n\times n$-matrix over $R$. We say that $A$ is
\emph{principled} if
\[
\det A_{S} = \prod_{i \in S} a_{i,i}
\qquad \text{for every }S\subseteq\ive{n}.
\]
\end{definition}

The following simple proposition is an exercise in unfolding
definitions:

\begin{proposition} \label{prop.1-princ}
Let $R$ be a commutative ring, and let $A=\tup{a_{i,j}}
_{i,j\in\ive{n}}$ be an $n\times n$-matrix over $R$.
Then, $A$ is $1$-principled if and only if $A$ is
$1$-diagonal and principled.
\end{proposition}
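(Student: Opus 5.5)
The plan is to prove the two implications separately, using only the definitions and the fact (noted before the statement) that the diagonal entries of $A$ are exactly its $1\times 1$ principal minors $\det A_{\set{i}} = a_{i,i}$.

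For the forward direction, assume $A$ is $1$-principled. Taking $S=\set{i}$ gives $a_{i,i}=\det A_{\set{i}}=1$ for every $i\in\ive{n}$, so $A$ is $1$-diagonal. Consequently, for every $S\subseteq\ive{n}$ we have
\[
\prod_{i\in S} a_{i,i} = \prod_{i\in S} 1 = 1 = \det A_S ,
\]
which is precisely the statement that $A$ is principled. The case $S=\varnothing$ is covered by the convention that the empty determinant and the empty product both equal $1$.

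For the reverse direction, assume $A$ is $1$-diagonal and principled. Then for every $S\subseteq\ive{n}$, principledness gives $\det A_S=\prod_{i\in S}a_{i,i}$, and the $1$-diagonal property turns this product into $\prod_{i\in S}1=1$. Hence $\det A_S=1$ for all $S$, so $A$ is $1$-principled.

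There is no real obstacle here. The only point requiring a moment's care is the empty set, where both sides equal $1$ by the stated conventions, so no separate case analysis is needed.
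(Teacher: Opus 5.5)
Your proof is correct and follows the paper's argument essentially verbatim: the forward direction reads off $a_{i,i}=\det A_{\set{i}}=1$ and then compares $\prod_{i\in S}a_{i,i}=1=\det A_S$, and the reverse direction substitutes $a_{i,i}=1$ into $\det A_S=\prod_{i\in S}a_{i,i}$. Your remark about $S=\varnothing$ is a harmless extra check that the paper leaves implicit.
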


\begin{proof}
$\Longrightarrow:$ Assume that $A$ is $1$-principled.
Then, each diagonal entry $a_{i,i}$ of $A$ equals the
principal $1\times 1$-minor $\det A_{\set{i}}$, which
is $1$ since $A$ is $1$-principled. Thus, $A$ is
$1$-diagonal.
Furthermore, for every $S \subseteq \ive{n}$, we have
$\prod_{i \in S} a_{i,i} = 1$ (since all factors
$a_{i,i}$ of this product are $1$, as we just saw)
and $\det A_{S} = 1$ (since $A$ is $1$-principled), so
that $\det A_{S} = \prod_{i \in S} a_{i,i}$ (by
comparing the preceding two equalities). This shows that
$A$ is principled. Thus, we have proved that $A$ is
$1$-diagonal and principled.

$\Longleftarrow:$ Assume that $A$ is
$1$-diagonal and principled. Then, for every
$S \subseteq \ive{n}$, we have
$\det A_S = \prod_{i \in S} a_{i,i}$ (since $A$ is
principled). But all the factors $a_{i,i}$ of the
latter product are $1$ (since $A$ is $1$-diagonal), so
this equality rewrites as $\det A_S = \prod_{i \in S} 1
= 1$. This shows that $A$ is $1$-principled.
\end{proof}

Our last two matrix properties rely on a simple combinatorial
notion.

\begin{definition}
Let $S$ be a set. A \emph{cycle} on $S$ will mean a $k$-tuple%
\[
C=\left(  i_{1},i_{2},\ldots,i_{k}\right)
\]
where $k>0$ and where $i_{1},i_{2},\ldots,i_{k}$ are distinct elements of
$S$. To be more precise, the cycle is not this $k$-tuple itself, but
rather its equivalence class under cyclic rotation (i.e., we count
$\left(  i_{1},i_{2},\ldots,i_{k}\right)  $ and $\left(  i_{2},i_{3}%
,\ldots,i_{k},i_{1}\right)  $ as being the same cycle). This cycle is said
to have \emph{length} $k$, \emph{vertices} $i_1, i_2, \ldots, i_k$ and
\emph{arcs} $\tup{i_1, i_2},\ \tup{i_2, i_3},\ \ldots,\ \tup{i_k, i_1}$;
furthermore, we call it \emph{nontrivial} if $k>1$ (that is,
if the cycle has more than one arc).

\end{definition}

\begin{definition}
Let $A = \tup{a_{i,j}}_{i,j\in\ive{n}}$ be an $n\times n$-matrix
over a commutative ring $R$.

\begin{enumerate}
\item[\textbf{(a)}]
The \emph{$A$-weight} of a cycle $C=\left(  i_{1},i_{2},\ldots,i_{k}\right)$
on $\ive{n}$ is defined to be
\[
w_{A}(C):=a_{i_{1},i_{2}}a_{i_{2},i_{3}}\cdots a_{i_{k-1},i_{k}}a_{i_{k}%
,i_{1}}.
\]

\item[\textbf{(b)}] We say that $A$ is \emph{nullcyclic} if
\[
w_{A}(C)=0\qquad\text{for each nontrivial cycle }C\text{ on }\ive{n}
\]
(that is, each nontrivial cycle on $\ive{n}$ has $A$-weight $0$).

\item[\textbf{(c)}] We say that $A$ is \emph{$1$-nullcyclic} if
$A$ is $1$-diagonal and nullcyclic.
\end{enumerate}
\end{definition}

\begin{example}
\label{exa.unitri}
If a matrix $A = \tup{a_{i,j}}_{i,j\in\ive{n}}$
is triangular, then $A$ is nullcyclic.
Indeed, any nontrivial cycle on $\ive{n}$ has an arc $\tup{i,j}$
with $i<j$ and an arc $\tup{i,j}$ with $i>j$, and at least one
of these arcs will satisfy $a_{i,j} = 0$; thus, the $A$-weight
of the cycle is $0$.

Thus, if a matrix $A = \tup{a_{i,j}}_{i,j\in\ive{n}}$
is unitriangular (i.e., triangular and satisfies $a_{i,i} = 1$
for all $i \in \ive{n}$), then $A$ is $1$-nullcyclic.

However, there are $1$-nullcyclic matrices that are
not unitriangular, such as
\[
A = \begin{pmatrix} 1 & 2 \\ 2 & 1 \end{pmatrix}
\qquad \text{ over }  R = \ZZ / 4.
\]
\end{example}

We then claim the following twin to Theorem~\ref{thm.main2}
(but only for $m \geq 0$, since the invertibility of $A$
is no longer guaranteed):

\begin{theorem}
\label{thm.main} Let $D$ be a commutative ring, let $I$ be an integrally
closed ideal of $D$, and set $R:=D/I$. Let $A$ be a principled
matrix over $R$. Then $A^{m}$ is principled for every $m\in\mathbb{N}$.
\end{theorem}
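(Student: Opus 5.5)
The plan is to route everything through the nullcyclic property, following the strategy announced in the abstract. There are three steps: (1) nullcyclic implies principled, over any ring; (2) powers of nullcyclic matrices are nullcyclic, over any ring; (3) principled implies nullcyclic over $R=D/I$ with $I$ integrally closed. Given these, Theorem~\ref{thm.main} follows at once: $A$ principled over $R$ gives $A$ nullcyclic by (3), then $A^m$ nullcyclic by (2), then $A^m$ principled by (1).

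\textbf{Step (1).} We expand $\det A_S$ as a sum over permutations $\sigma$ of $S$. Each $\sigma$ decomposes into disjoint cycles, and its term is $\pm$ the product of the $A$-weights of those cycles. If $\sigma \neq \id$, some cycle is nontrivial, so the term vanishes. Only the identity term $\prod_{i\in S} a_{i,i}$ survives.

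\textbf{Step (2).} It suffices to show that $AB$ is nullcyclic whenever $A$ is nullcyclic and $B$ is a polynomial in $A$. Concretely, we induct on $m$, with $A^m$ being $A\cdot A^{m-1}$.
\begin{itemize}
\item Write $(A^m)_{i,j}$ as a sum over walks $i=j_0\to j_1\to\cdots\to j_m=j$ of the products of the corresponding $a$-entries.
\item Then $w_{A^m}(C)$ for a nontrivial cycle $C$ is a sum over closed walks that visit the vertices of $C$ in order, at times $0,m,2m,\dots$.
\item Each closed walk is a closed walk in the complete digraph with loops, so its edge multiset splits into simple cycles. Since $C$ is nontrivial, the walk is not just loops: it moves from $i_1$ to $i_2\neq i_1$ and eventually returns.
\item By a standard argument (repeatedly remove a simple cycle from a closed walk), the product contains the $A$-weight of some nontrivial simple cycle as a factor, and so it vanishes.
\end{itemize}
The decomposition point is that in any closed walk containing a non-loop edge, the first repeated vertex along the walk yields a simple subcycle. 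Either this subcycle is nontrivial, or it is a loop, which we strip and continue. Hence the monomial is divisible by some nontrivial cycle weight. No induction on $m$ is then needed, and commutativity of $R$ suffices.

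\textbf{Step (3)} is the main obstacle. We would first prove the auxiliary result from the abstract: for any $n\times n$ matrix over any ring, each nontrivial cycle weight $w_A(C)$ is integral over the ideal $J$ generated by the differences $\det A_S-\prod_{i\in S}a_{i,i}$.

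To do so, we work with generic entries and induct on the length $k$ of the cycle. Restricting to $S=$ the vertex set of $C$, the difference $\det A_S-\prod a_{i,i}$ equals a signed sum of products of cycle weights over non-identity permutations. In this sum $w_A(C)$ appears with sign $(-1)^{k-1}$, alongside terms each containing a nontrivial cycle weight on a proper subset, or other $k$-cycles on $S$.

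The difficulty is the other $k$-cycles. I would try to handle them by considering the products $w_A(C)\,w_A(C')$ and showing that a suitable monic polynomial in $w_A(C)$ lies in $J$ plus the integral closure of the shorter-cycle ideal. For instance, multiplying by $w_A(C)^{N}$ and using that $w_A(C)w_A(C')$ factors as a product of weights of cycles on $S$ with multiplicities, each of which can be rearranged into shorter cycles whenever $C'\neq C$. Transitivity of integral closure then completes the induction.

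Finally, lift $A$ to a matrix $\tilde A$ over $D$. The differences for $\tilde A$ lie in $I$, so $J\subseteq I$. Hence each $w_{\tilde A}(C)$ is integral over $I$, and therefore lies in $I$ because $I$ is integrally closed. Reducing modulo $I$ gives that $A$ is nullcyclic.
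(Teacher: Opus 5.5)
Your Steps (1) and (2) and the final lift to $D$ are exactly the paper's argument (Propositions~\ref{prop.strong-implies-principled} and~\ref{prop.strong-powers}, Theorem~\ref{thm.integrally-closed-quotient-strong}). The gap is in Step (3), the only non-routine part, where you assert the decisive fact instead of proving it. That fact is: if $C\neq C'$ are Hamilton cycles on $S$ with $\abs{S}=r$, then the $2r$ arcs of $C\uplus C'$ can be repartitioned into cycles that are all shorter than $r$. This does not follow from ``a balanced multidigraph splits into cycles'', because $\{C,C'\}$ is itself such a splitting. You need an actual construction, such as the one in Lemma~\ref{lem.two-Hamilton-cycles}:
take a vertex $v$ whose out-arcs $(v,a)\in C$ and $(v,b)\in C'$ differ. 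Close the $C'$-path from $a$ to $v$ with $(v,a)$, and close the $C$-path from $b$ to $v$ with $(v,b)$. These two cycles are arc-disjoint and shorter than $r$ because $a\neq b$. The balanced remainder has no arc leaving $v$, so it splits into cycles avoiding $v$. You must also note that the pieces are loop-free (hence nontrivial) and that there are at least two of them (in fact at least three, as $2r/(r-1)>2$). This gives $w_A(C)w_A(C')\in K^2$ rather than merely $\in K$, where $K:=K_{<r}(A)$.

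That count matters because of a second imprecision: ``a monic polynomial in $w_A(C)$ lies in $J$ plus \ldots'' is not integrality over an ideal. You need a relation $x^d+c_1x^{d-1}+\cdots+c_d=0$ with $c_j\in L^j$ for $L:=J(A)+K$. A relation whose coefficients lie only in $L$ gives at best $x\in\sqrt{L}$. Once you have the lemma, your idea already works with $N=1$. Set $z_H:=(-1)^{r-1}w_A(H)$ and $e_1:=\sum_H z_H$, which lies in $L$ by expanding $\det A_S$. Multiplying by $z_C$ gives
\[
z_C^2-e_1z_C+\sum_{C'\neq C}z_Cz_{C'}=0,\qquad e_1\in L,\quad \sum_{C'\neq C}z_Cz_{C'}\in K^3\subseteq L^2,
\]
so $z_C\in\overline L=\overline{J(A)}$ by the induction hypothesis and Lemma~\ref{lem.integral-closure-basics}~\textbf{(d)}. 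This degree-$2$ equation is leaner than the paper's route, which takes $\prod_G(X-z_G)$ over all Hamilton cycles on $S$ and needs Lemma~\ref{lem.products-Hamilton-cycles} to get $e_t\in L^t$ for every $t$. Both routes, however, rest on the same combinatorial lemma, and that lemma is exactly what your proposal leaves unproved.
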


The proof of Theorem~\ref{thm.main} relies on three independent
results, all of which hold over any (commutative) ring:
First, every nullcyclic matrix
is principled (Proposition~\ref{prop.strong-implies-principled}),
although the converse does not always hold.
Second, nullcyclic matrices are stable under powers
(Proposition~\ref{prop.strong-powers}).
Third, every nontrivial cycle's $A$-weight is integral over the
ideal generated by the principal-minor defects (i.e., the
principal minors minus the corresponding products
of diagonal entries) (Theorem~\ref{thm.cycle-weight-integral}).
In the setting of Theorem~\ref{thm.main}, this forces these weights to
belong to $I$, since $I$ is integrally closed.

The proof of Theorem~\ref{thm.main2} will reuse these auxiliary
results, partly in modified form.
Further results will be proved along the way.
Figure~\ref{fig.main-implications} provides a map of most of the
implications we will prove (for all $m\in\NN$): A solid arc stands for an
implication valid over every commutative ring. A dashed blue arc denotes an
implication for matrices over a quotient $D/I$, where $I$ is an integrally
closed ideal of a commutative ring $D$.
Labels on the arcs signify where the respective implications are shown
(the letters ``P.'' and ``T.'' stand for ``Proposition'' and
``Theorem'', respectively; trivial implications are unlabelled).
Some of the results on the $1$-side actually hold
for every $m\in\ZZ$, as their labels indicate.

After we prove all the results we have described, we shall discuss some
examples of integrally closed ideals.

\begin{figure}[p]
\centering
\begin{tikzpicture}[
    x=6.9cm,
    y=2.35cm,
    statement/.style={draw, rounded corners, align=center,
      text width=3.85cm, minimum height=8mm, inner sep=3pt},
    implication/.style={-{Stealth[length=2.1mm]}, semithick},
    quotient/.style={implication, dashed, blue},
    every edge quotes/.style={font=\scriptsize, sloped, fill=white,
      inner sep=1.5pt, text=black},
    yscale=1.25
  ]
  \node[statement] (Ad)  at (0, 2) {$A$ is $1$-diagonal};
  \node[statement] (An1) at (0, 1) {$A$ is $1$-nullcyclic};
  \node[statement] (Ap1) at (0, 0) {$A$ is $1$-principled};
  \node[statement] (An)  at (0,-1) {$A$ is nullcyclic};
  \node[statement] (Ap)  at (0,-2) {$A$ is principled};

  \node[statement] (Amd)  at (1, 2) {$A^m$ is $1$-diagonal};
  \node[statement] (Amn1) at (1, 1) {$A^m$ is $1$-nullcyclic};
  \node[statement] (Amp1) at (1, 0) {$A^m$ is $1$-principled};
  \node[statement] (Amn)  at (1,-1) {$A^m$ is nullcyclic};
  \node[statement] (Amp)  at (1,-2) {$A^m$ is principled};

  \draw[implication] (An1) edge["P.~\ref{prop.strong-implies-principled2}"] (Ap1);
  \draw[implication] (An1) edge[] (Ad);
  \draw[implication] (Ap1) to[out=160, in=200, looseness=1.75]
    node[midway, below, sloped, font=\scriptsize, fill=white, inner sep=1.5pt]
    {P.~\ref{prop.1-princ}} (Ad);
  \draw[implication] (Ap1) to[out=200, in=160, looseness=1.75]
    node[midway, above, sloped, font=\scriptsize, fill=white, inner sep=1.5pt]
    {P.~\ref{prop.1-princ}} (Ap);
  \draw[implication] (An1) to[out=200, in=160, looseness=1.5] (An);
  \draw[implication] (An)  edge["P.~\ref{prop.strong-implies-principled}"'] (Ap);
  \draw[quotient]    (Ap1) edge[bend left=48,
    "T.~\ref{thm.integrally-closed-quotient-strong2}"] (An1);
  \draw[quotient]    (Ap) edge[bend left=48,
    "T.~\ref{thm.integrally-closed-quotient-strong}"'] (An);

  \draw[implication] (Amn1) edge["P.~\ref{prop.strong-implies-principled2}"] (Amp1);
  \draw[implication] (Amn1) edge[] (Amd);
  \draw[implication] (Amp1) to[out=20, in=-20, looseness=1.75]
    node[midway, below, sloped, font=\scriptsize, fill=white, inner sep=1.5pt]
    {P.~\ref{prop.1-princ}} (Amd);
  \draw[implication] (Amp1) to[out=-20, in=20, looseness=1.75]
    node[midway, above, sloped, font=\scriptsize, fill=white, inner sep=1.5pt]
    {P.~\ref{prop.1-princ}} (Amp);
  \draw[implication] (Amn1) to[out=200, in=160, looseness=1.5] (Amn);
  \draw[implication] (Amn)  edge["P.~\ref{prop.strong-implies-principled}"'] (Amp);
  \draw[quotient]    (Amp1) edge[bend left=48,
    "T.~\ref{thm.integrally-closed-quotient-strong2}"] (Amn1);
  \draw[quotient]    (Amp) edge[bend left=48,
    "T.~\ref{thm.integrally-closed-quotient-strong}"'] (Amn);

  \draw[implication] (An1) edge["P.~\ref{prop.strong-integer-powers2}"] (Amn1);
  \draw[quotient] (Ap1) edge["T.~\ref{thm.main2}"'] (Amp1);
  \draw[implication] (An) edge["P.~\ref{prop.strong-powers}"] (Amn);
  \draw[quotient] (Ap) edge["T.~\ref{thm.main}"'] (Amp);
\end{tikzpicture}
\caption{The main implications between the five matrix properties and their
power analogues.
Dashed blue arcs mean implications that hold when the base ring is
of the form $D/I$ for a ring $D$ and an integrally closed ideal $I$.
Solid black arcs are implications holding over all rings.
Not shown is the implication ``$A$ is $1$-principled''
$\Longrightarrow$ ``$A^m$ is $1$-diagonal'', proved in
\cite[Theorem 5.2]{GrinbergPrincipal} over all rings.}
\label{fig.main-implications}
\end{figure}
\clearpage

\begin{remark}
The condition of being $1$-principled has previously appeared in toric
topology.  A result of Masuda and Panov
\cite[Lemma~3.3]{MasudaPanov} (which they credit to Dobrinskaya)
says that, over an integral domain, every
$1$-principled matrix is conjugate by a permutation matrix to an upper
unitriangular matrix.  Thus, over an integral domain, all its powers are
$1$-principled for this stronger structural reason.  This triangularization
does not extend to arbitrary rings with zero divisors; for example,
the $1$-principled matrix in Example~\ref{exa.unitri} is not conjugate by a
permutation matrix to a triangular matrix.

In the special case of the field $\mathbb Z/2$, Choi
\cite[\S 2.1]{ChoiSmallCovers} used the above triangularization to identify
$1$-principled $n\times n$-matrices with labeled acyclic digraphs on $n$
vertices: the correspondence sends a digraph to its adjacency matrix plus
the identity matrix.  This arose in the study of small covers over cubes.

There is also a Gaussian-elimination interpretation.  Let $P$
be a permutation matrix, and let $\Delta_k$ denote the $k\times k$ leading
principal minor of $P^{-1}AP$, with $\Delta_0=1$.  The standard $LU$
criterion \cite[Theorem~9.1]{Higham} (easily generalized to arbitrary
commutative rings, see \cite[Proposition 4.3 (e)]{Sokal})
says that $P^{-1}AP$ has a unique
factorization $LU$, where $L$ is lower unitriangular and $U$ is upper
triangular with invertible entries on its diagonal,
whenever all the $\Delta_k$ are invertible; moreover, the $k$-th
diagonal entry of $U$ is $\Delta_k/\Delta_{k-1}$.  Hence, if $A$ is
$1$-principled, then $P^{-1}AP$ certainly does have such a factorization,
and moreover both $L$ and $U$ are unitriangular, for every $P$.
Conversely, if such a unitriangular $LU$ factorization exists for every $P$,
then all principal minors of $A$ are $1$.  Thus this property gives an
equivalent description of $1$-principled matrices, although this
description is not useful for proving any of our claims.
\end{remark}

\section{Nullcyclic matrices}

We begin with the theory of nullcyclic matrices.

\begin{proposition}
\label{prop.strong-implies-principled} Every nullcyclic matrix
over a commutative ring is principled.
\end{proposition}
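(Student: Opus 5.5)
The plan is to expand each principal minor $\det A_S$ by the Leibniz formula and decompose each permutation into disjoint cycles. First, observe that a principal submatrix of a nullcyclic matrix is again nullcyclic. If $A$ is nullcyclic and $S\subseteq\ive{n}$, then every nontrivial cycle on $S$ is also a nontrivial cycle on $\ive{n}$, and its weight is computed from the same entries. So it suffices to show $\det A = \prod_{i\in\ive{n}} a_{i,i}$ for every nullcyclic $n\times n$-matrix $A$, and then apply this to each $A_S$, with entries indexed by $S$ instead of $\ive{n}$.

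For the full determinant, write
\[
\det A=\sum_{\sigma\in\Symm_n}\sgn(\sigma)\prod_{i\in\ive{n}}a_{i,\sigma(i)}.
\]
Each permutation $\sigma$ decomposes into disjoint cycles $C_1,\ldots,C_r$ covering $\ive{n}$, with fixed points counted as cycles of length $1$. Since the cycles partition $\ive{n}$ and the arcs of $C_j$ are exactly the pairs $\tup{i,\sigma(i)}$ for $i$ a vertex of $C_j$, we get
\[
\prod_{i}a_{i,\sigma(i)}=\prod_{j=1}^{r}w_A(C_j).
\]
If $\sigma\neq\id$, at least one $C_j$ is nontrivial, so its weight vanishes by nullcyclicity and the whole term is $0$. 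Only $\sigma=\id$ survives, contributing $\sgn(\id)\prod_i a_{i,i}=\prod_i a_{i,i}$. This gives the claim. The empty set is covered by the convention $\det A_\varnothing=1$, which matches the empty product.

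I expect no real obstacle. The only point requiring care is the bookkeeping identity that the Leibniz monomial of $\sigma$ factors as the product of the weights of its cycles. This follows directly from the definition of $w_A$ and the fact that the cycle decomposition of $\sigma$ partitions $\ive{n}$.
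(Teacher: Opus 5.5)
Your proposal is correct and matches the paper's argument: the paper expands $\det A_S$ over $\Symm_S$ and notes that every non-identity permutation has a nontrivial cycle whose $A$-weight divides its Leibniz term, so only the identity term $\prod_{i\in S}a_{i,i}$ survives. Your preliminary reduction to the full determinant, via nullcyclicity of principal submatrices, is just a cosmetic repackaging of the same proof.
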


\begin{proof}
Let $A=\tup{a_{i,j}}_{i,j\in\ive{n}}$ be a
nullcyclic $n\times n$-matrix. Let $S\subseteq\ive{n}$. We expand
\begin{equation}
\det A_{S}
= \sum_{\pi\in\Symm_S}\sgn\tup{\pi}
\prod_{i\in S}a_{i,\pi(i)}
\label{eq.lem.strong-implies-principled.1}
\end{equation}
(where $\Symm_S$ is the group of all permutations of $S$). The
identity permutation $\id \in \Symm_S$ contributes the addend
\[
\sgn\tup{\id} \prod_{i\in S}a_{i,\id(i)}
= \prod_{i\in S}a_{i,i}
\]
to the right-hand side of \eqref{eq.lem.strong-implies-principled.1}.
The remaining addends on the right-hand side of
\eqref{eq.lem.strong-implies-principled.1} correspond to
non-identity permutations $\pi \in \Symm_S$.
Every non-identity permutation $\pi \in \Symm_S$
has at least one nontrivial cycle $\left(  i_1, i_2, \ldots, i_k\right)$
in its cycle decomposition. The corresponding addend on the right-hand side of
\eqref{eq.lem.strong-implies-principled.1} therefore contains, as a factor,
the $A$-weight $a_{i_1,i_2} a_{i_2,i_3} \cdots a_{i_{k-1},i_k} a_{i_k,i_1}$
of this nontrivial cycle. This factor is $0$, since $A$ is
nullcyclic. Hence all non-identity addends on the right-hand side of
\eqref{eq.lem.strong-implies-principled.1} vanish, and so the equality
\eqref{eq.lem.strong-implies-principled.1} simplifies to
\[
\det A_{S}= \prod_{i\in S}a_{i,i}.
\]
That is, $A$ is principled.
\end{proof}

We shall use some basic graph theory (see, e.g., \cite[Chapter 4]{22s}). Let
$K_{n}^{\rightarrow}$ be the simple digraph (i.e., directed graph) with $n$
vertices $1,2,\ldots,n$ and $n^{2}$ arcs $\left(  i,j\right)  $ for all
$i,j\in\ive{n}$. What we called \textquotedblleft cycles on
$\ive{n}$\textquotedblright\ above are exactly the cycles of
$K_{n}^{\rightarrow}$ (considered up to cyclic
rotation). The nontrivial cycles on $\ive{n}$ are then the
cycles of $K_n^\to$ having length $>1$.
We make the following elementary observation about walks.

\begin{lemma}
\label{lem.closed-walk-has-cycle} Let
\[
i_{0}\rightarrow i_{1}\rightarrow\cdots\rightarrow i_{m}=i_{0}
\]
be a closed walk of a simple digraph. If this closed walk is not the
stationary walk $i_{0}\rightarrow i_{0}\rightarrow\cdots\rightarrow i_{0}$,
then it contains a nontrivial cycle. (\textquotedblleft
Contains\textquotedblright\ means that each arc of the cycle is an arc of the walk.)
\end{lemma}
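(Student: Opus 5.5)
We argue by strong induction on the length $m$ of the closed walk. Suppose $i_0\to i_1\to\cdots\to i_m=i_0$ is not stationary. Then some arc $(i_{p-1},i_p)$ has $i_{p-1}\neq i_p$. By cyclically rotating the walk (a rotation of a closed walk is again a closed walk with the same multiset of arcs), we may assume this is the arc $(i_0,i_1)$, so $i_0\neq i_1$. In particular $m\geq 2$, since $m=1$ would force $i_1=i_0$.

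Next we look for the first repetition along the walk after leaving $i_0$. Let $q\in\{1,\ldots,m\}$ be the smallest index such that $i_q\in\{i_0,i_1,\ldots,i_{q-1}\}$. Such a $q$ exists because $i_m=i_0$. Let $p<q$ be the unique index with $i_p=i_q$; it is unique because $i_0,\ldots,i_{q-1}$ are distinct by the minimality of $q$. Then $(i_p,i_{p+1},\ldots,i_{q-1})$ consists of distinct vertices, so it is a cycle. Its arcs $(i_p,i_{p+1}),\ldots,(i_{q-2},i_{q-1}),(i_{q-1},i_q)=(i_{q-1},i_p)$ are all arcs of the walk.

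It remains to ensure this cycle is nontrivial, i.e.\ $q-p>1$. The danger is $q=p+1$, which happens when the walk has a loop $i_p\to i_p$. To avoid this, we first delete all loop steps. Removing every step $i_{r-1}\to i_r$ with $i_{r-1}=i_r$ yields a shorter closed walk whose arcs are a subset of the original arcs. This walk is nonempty, since it still contains the arc $(i_0,i_1)$, and it has no loops. Applying the first-repetition argument to this loop-free walk gives $q-p\geq 2$, because $i_{p+1}\neq i_p$. Hence the resulting cycle is nontrivial, and each of its arcs is an arc of the original walk.

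The only delicate point is this loop issue; once loops are removed, the argument is just the standard extraction of a cycle from a closed walk. Induction is in fact unnecessary: the direct minimal-repetition argument on the loop-free walk suffices.
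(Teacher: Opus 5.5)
Your proof is correct and follows essentially the same route as the paper: delete all loop steps, which leaves a nonempty loop-free closed walk because the original walk is not stationary, and then take the stretch of the walk between a vertex's first occurrence and its first repetition, which is a cycle of length at least $2$ since no loops remain. The opening framing as a strong induction on $m$ is never used, as you note yourself, and can simply be dropped.
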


\begin{proof}
Delete all loops\footnote{Recall that a \emph{loop} means an arc of the form
$\tup{v,v}$ for some vertex $v$.} from the closed walk. Since the original walk is not
stationary, some arcs remain upon this deletion.
Starting at any remaining arc and following the
walk cyclically, eventually a vertex is repeated. The part of the walk between
the first occurrence of this vertex and its next occurrence is a closed walk
with no repeated internal vertices. This closed walk is therefore a cycle,
and moreover a nontrivial cycle (since all loops have been deleted).
\end{proof}

\begin{proposition}
\label{prop.strong-powers} Let $A$ be a nullcyclic matrix over a
commutative ring $R$. Then $A^{m}$ is nullcyclic for every
$m\in\mathbb{N}$.
\end{proposition}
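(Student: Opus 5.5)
Set $B := A^m = \tup{b_{i,j}}_{i,j\in\ive{n}}$; the case $m=0$ is trivial since the identity matrix is diagonal (so nullcyclic by Example~\ref{exa.unitri}). So we may assume $m\geq 1$ and fix a nontrivial cycle $C = \tup{i_1, i_2, \ldots, i_k}$ on $\ive{n}$ with $k>1$. The goal is $w_B(C) = 0$. Expanding each entry as $b_{p,q} = \sum_{j_1,\ldots,j_{m-1}} a_{p,j_1} a_{j_1,j_2}\cdots a_{j_{m-1},q}$ and multiplying out, we get
\[
w_B(C) = b_{i_1,i_2} b_{i_2,i_3} \cdots b_{i_k,i_1}
= \sum_{W} \prod_{(u,v) \text{ arc of } W} a_{u,v},
\]
where $W$ ranges over all closed walks of length $km$ in $K_n^{\rightarrow}$ that start at $i_1$ and pass through $i_2, i_3, \ldots, i_k$ at times $m, 2m, \ldots, (k-1)m$ (and return to $i_1$ at time $km$). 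Here the product is over the arcs of $W$ counted with multiplicity, i.e., it is the product of $a_{u,v}$ over all $km$ steps $u\to v$ of $W$.

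Each such walk $W$ is a closed walk that is not stationary, because it visits both $i_1$ and $i_2$, which are distinct since $k>1$. By Lemma~\ref{lem.closed-walk-has-cycle}, $W$ contains a nontrivial cycle $C'$. I want to conclude that $w_A(C')$ divides the product $\prod a_{u,v}$ over the steps of $W$. This needs that the arcs of $C'$ are pairwise distinct, which holds because the vertices of $C'$ are distinct, and that each arc of $C'$ appears among the steps of $W$. The latter follows from ``contains''; more precisely, the proof of the lemma extracts $C'$ as a contiguous segment of the cyclically read walk (with loops deleted), so the arcs of $C'$ correspond to distinct step positions of $W$. Hence the product over the steps of $W$ equals $w_A(C')$ times the product over the remaining steps. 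Since $A$ is nullcyclic, $w_A(C') = 0$, so every summand vanishes and $w_B(C)=0$.

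The only delicate point is this multiplicity issue: making sure the cycle's arcs occupy distinct positions of the walk, so that $w_A(C')$ genuinely factors out of the monomial. If one wanted to avoid relying on the lemma's proof, it would be enough to note that the arcs of a cycle with distinct vertices are distinct, and so each of them can be matched to a different step of $W$. Everything else is routine expansion of matrix products.
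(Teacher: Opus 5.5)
Your proposal is correct and follows essentially the same route as the paper: you expand $w_{A^m}(C)$ as a sum over concatenated closed walks of length $km$, note that each is non-stationary because $i_1 \neq i_2$, and kill each summand with a nontrivial cycle from Lemma~\ref{lem.closed-walk-has-cycle}. Your extra remark that the distinct arcs of $C'$ must occupy distinct steps of $W$ (so $w_A(C')$ really divides the walk's weight) is a correct and welcome clarification of a step the paper leaves implicit.
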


\begin{proof}
Let $m\in\mathbb{N}$. The case $m=0$ is clear, since $A^{0}=I_{n}$. Assume
$m>0$.

Write the $n\times n$-matrix $A$ as $A=\tup{a_{i,j}}_{i,j\in\ive{n}}$.
We also use $B_{i,j}$ to refer to the $\left(  i,j\right)  $-th entry of
any matrix $B$; thus, $A_{i,j}=a_{i,j}$ for all $i,j\in\ive{n}$.

It is well-known that for any two
vertices $i$ and $j$ of $K_{n}^{\rightarrow}$, we have%
\footnote{The formula \eqref{pf.lem.strong-powers.1}
(generalized to arbitrary digraphs)
appears, e.g., in \cite[last sentence of \S 1]{Zeil}
and in \cite[Theorem 4.7.1]{StanleyEC1}
(and also -- in an unweighted version -- in
\cite[Theorem 4.5.10]{22s} as well, where a more detailed proof
can be found, which can be easily adapted to the weighted case).
Anyway, the reader
can easily prove it by induction on $m$ using $A^m = A A^{m-1}$.}
\begin{equation}
\left(  A^{m}\right)_{i,j}
= \sum_{\substack{i=i_{0}\rightarrow i_{1}\rightarrow\cdots
\rightarrow i_{m}=j\\\text{is a walk of }K_{n}^{\rightarrow}}}
a_{i_{0},i_{1}}a_{i_{1},i_{2}}\cdots a_{i_{m-1},i_{m}}
.\label{pf.lem.strong-powers.1}
\end{equation}
Let us refer to the product $a_{i_{0},i_{1}}a_{i_{1},i_{2}}\cdots
a_{i_{m-1},i_{m}}$ in this sum as the \emph{$A$-weight} of the walk
$i=i_{0}\rightarrow i_{1}\rightarrow\cdots\rightarrow i_{m}=j$. Thus,
\eqref{pf.lem.strong-powers.1} says that
\begin{equation}
\left(  A^{m}\right)_{i,j}
= \tup{\text{sum of the $A$-weights of all length-$m$ walks from $i$ to $j$}}.
\qquad\qquad
\label{pf.lem.strong-powers.2}
\end{equation}

We must prove that $A^m$ is nullcyclic.
In other words,
we must prove that every nontrivial cycle on $\ive{n}$ has $A^{m}%
$-weight $0$. Let
\[
C=\left(  i_{1},i_{2},\ldots,i_{k}\right)
\]
be a nontrivial cycle on $\ive{n}$, with length $k>1$.
We must show that $w_{A^m} \tup{C} = 0$.

Note that $i_1 \neq i_2$ (by the definition of a cycle, since $k>1$).

Set $i_{k+1}=i_{1}$ (that is, read the indices cyclically modulo $k$).
Then,
\begin{align*}
w_{A^m} \tup{C}
&=
\left(  A^{m}\right)_{i_{1},i_{2}}\left(  A^{m}\right)_{i_{2},i_{3}}%
\cdots\left(  A^{m}\right)_{i_{k},i_{1}}
= \prod_{j=1}^{k}\left(
A^{m}\right)_{i_{j},i_{j+1}} \\
&= \prod_{j=1}^{k}
\tup{\text{sum of the $A$-weights of all length-$m$ walks from $i_j$ to $i_{j+1}$}}
\end{align*}
(by \eqref{pf.lem.strong-powers.2}, applied to $i_j$ and $i_{j+1}$
instead of $i$ and $j$).
Expanding this product, we obtain a
sum over all $k$-tuples $\left(W_1, W_2, \ldots, W_k\right)$
of length-$m$ walks from $i_{j}$ to $i_{j+1}$ for each
$j\in\ive{k}$. The addend corresponding to such a $k$-tuple is the
product of the $A$-weights of all these walks
$W_1, W_2, \ldots, W_k$; but this is, of course, the
$A$-weight of the closed walk $W_1W_2\cdots W_k$
(of length $km$) obtained by concatenating these
$k$ walks. This closed walk $W_1W_2\cdots W_k$ is not stationary
(since $i_1 \neq i_2$ are two distinct vertices on it).
Hence it contains a nontrivial cycle $C'$
(by Lemma~\ref{lem.closed-walk-has-cycle}). The $A$-weight of
this cycle $C'$ is $0$ since $A$ is nullcyclic.
Thus, the whole closed walk $W_1W_2\cdots W_k$ has $A$-weight $0$ as
well (since $C'$ is contained in $W_1W_2\cdots W_k$, and thus
the $A$-weight of $W_1W_2\cdots W_k$ is a multiple
of the $A$-weight of $C'$).

Thus, we have shown that $w_{A^m} \tup{C}$ is a sum of $A$-weights of certain
closed walks, but each of these closed walks has $A$-weight $0$. Hence,
\[
w_{A^m} \tup{C} = 0.
\]
As explained above, this completes the proof.
\end{proof}

\begin{corollary}
\label{cor.strong-powers-principled} Let $A$ be a nullcyclic
matrix over a commutative ring $R$. Then $A^{m}$ is principled for every
$m\in\mathbb{N}$.
\end{corollary}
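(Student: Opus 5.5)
The plan is to chain the two results just proved. Let $A$ be nullcyclic over $R$ and let $m\in\NN$.

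First, apply Proposition~\ref{prop.strong-powers} to $A$. It shows that $A^{m}$ is nullcyclic. This proposition covers every $m\in\NN$, including $m=0$, where $A^{0}=I_n$ is diagonal and hence trivially nullcyclic.

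Second, apply Proposition~\ref{prop.strong-implies-principled} to the matrix $A^{m}$ in place of $A$. Since $A^{m}$ is a nullcyclic $n\times n$-matrix over the commutative ring $R$, that proposition yields that $A^{m}$ is principled. Explicitly, this means
\[
\det\tup{A^{m}}_{S}=\prod_{i\in S}\tup{A^{m}}_{i,i}
\qquad\text{for every } S\subseteq\ive{n}.
\]

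There is no real obstacle here. The only thing to check is that both propositions hold over an arbitrary commutative ring, so no hypothesis on $R$ is lost in the chaining, and they do. The substantive work (the walk expansion of $A^{m}$ and the cycle extraction in Lemma~\ref{lem.closed-walk-has-cycle}) has already been done in Proposition~\ref{prop.strong-powers}.
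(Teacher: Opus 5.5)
Your proof is correct and is exactly the paper's argument: Proposition~\ref{prop.strong-powers} shows that $A^m$ is nullcyclic, and Proposition~\ref{prop.strong-implies-principled} applied to $A^m$ then shows that it is principled.
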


\begin{proof}
The matrix $A^m$ is nullcyclic by
Proposition~\ref{prop.strong-powers}, and therefore is principled
by Proposition~\ref{prop.strong-implies-principled}.
\end{proof}

\begin{remark}
Proposition~\ref{prop.strong-powers} is genuinely a statement about powers,
not about products. Even commuting products of $1$-nullcyclic matrices
need not be nullcyclic.

For an explicit counterexample, let $R = \ZZ / 2 \ZZ = \mathbb F_2$. Let
\[
J=\begin{pmatrix}
1&1\\
1&1
\end{pmatrix},
\]
so that $J^2=0$. Now define the block matrices
\[
A=\begin{pmatrix}
I_2&0\\
J&I_2
\end{pmatrix}
=\begin{pmatrix}
1&0&0&0\\
0&1&0&0\\
1&1&1&0\\
1&1&0&1
\end{pmatrix},
\qquad
B=\begin{pmatrix}
I_2&J\\
0&I_2
\end{pmatrix}
=\begin{pmatrix}
1&0&1&1\\
0&1&1&1\\
0&0&1&0\\
0&0&0&1
\end{pmatrix}.
\]
Both $A$ and $B$ are unitriangular and thus $1$-nullcyclic
(by Example~\ref{exa.unitri}).

However,
\[
AB=\begin{pmatrix}
I_2&J\\
J&I_2+J^2
\end{pmatrix}
=\begin{pmatrix}
I_2&J\\
J&I_2
\end{pmatrix}
=\begin{pmatrix}
I_2+J^2&J\\
J&I_2
\end{pmatrix}=BA
=\begin{pmatrix}
1&0&1&1\\
0&1&1&1\\
1&1&1&0\\
1&1&0&1
\end{pmatrix}.
\]
This common product is not nullcyclic, since the nontrivial cycle
$\tup{1,3}$ has $AB$-weight $1$.
\end{remark}

\section{\texorpdfstring{$1$-Nullcyclic matrices}{1-Nullcyclic matrices}}

The analogue of Proposition~\ref{prop.strong-implies-principled}
for $1$-nullcyclic matrices follows easily from Proposition~\ref{prop.strong-implies-principled}:

\begin{proposition}
\label{prop.strong-implies-principled2} Every $1$-nullcyclic matrix
over a commutative ring is $1$-principled.
\end{proposition}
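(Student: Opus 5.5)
The plan is to reduce the claim directly to Proposition~\ref{prop.strong-implies-principled} and Proposition~\ref{prop.1-princ}, with no new combinatorics needed. Let $A$ be a $1$-nullcyclic matrix over a commutative ring $R$. By the definition of $1$-nullcyclic, $A$ is both $1$-diagonal and nullcyclic.

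First, since $A$ is nullcyclic, Proposition~\ref{prop.strong-implies-principled} shows that $A$ is principled. This means $\det A_S = \prod_{i\in S} a_{i,i}$ for every $S\subseteq\ive{n}$.

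Second, we already know $A$ is $1$-diagonal. So $A$ is $1$-diagonal and principled, and the ``$\Longleftarrow$'' direction of Proposition~\ref{prop.1-princ} yields that $A$ is $1$-principled. Concretely, every $\prod_{i\in S} a_{i,i}$ is a product of $1$'s, so $\det A_S = 1$.

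There is no real obstacle here. The only point needing care is that the empty set $S=\varnothing$ is covered. It is: both sides equal $1$ by the empty-determinant and empty-product conventions, and this case is already handled inside the two cited propositions.
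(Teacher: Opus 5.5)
Your proposal is correct and matches the paper's proof exactly. The paper also uses Proposition~\ref{prop.strong-implies-principled} to get that $A$ is principled, and then combines this with $1$-diagonality via Proposition~\ref{prop.1-princ} to conclude that $A$ is $1$-principled.
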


\begin{proof}
Let $A$ be a $1$-nullcyclic matrix. Then, $A$ is nullcyclic and
thus principled (by Proposition~\ref{prop.strong-implies-principled}).
But $A$ is furthermore $1$-diagonal (being $1$-nullcyclic).
Hence, $A$ is $1$-principled (by Proposition~\ref{prop.1-princ}).
\end{proof}

Next we derive the analogue of Proposition~\ref{prop.strong-powers}
for $1$-nullcyclic matrices:

\begin{proposition}
\label{prop.strong-powers2} Let $A$ be a $1$-nullcyclic matrix over a
commutative ring $R$. Then $A^{m}$ is $1$-nullcyclic for every
$m\in\mathbb{N}$.
\end{proposition}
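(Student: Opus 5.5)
Since $1$-nullcyclic means $1$-diagonal and nullcyclic, the plan splits into two parts. The nullcyclic part is exactly Proposition~\ref{prop.strong-powers}, applied to $A$ (which is nullcyclic, being $1$-nullcyclic); it gives that $A^m$ is nullcyclic for every $m\in\NN$. What remains is to show that $A^m$ is $1$-diagonal, i.e., that $\tup{A^m}_{i,i}=1$ for all $i\in\ive{n}$. (Alternatively, one could quote \cite[Theorem 5.2]{GrinbergPrincipal} via Proposition~\ref{prop.strong-implies-principled2}, but a direct argument is just as short and self-contained.)

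For the diagonal, I would use the walk formula \eqref{pf.lem.strong-powers.1} from the proof of Proposition~\ref{prop.strong-powers}, with $i=j$. It expresses $\tup{A^m}_{i,i}$ as the sum of the $A$-weights of all closed walks $i=i_0\to i_1\to\cdots\to i_m=i$ of length $m$. There are two kinds of such walks:
\begin{enumerate}
\item[\textbf{(1)}] The stationary walk $i\to i\to\cdots\to i$ has $A$-weight $a_{i,i}^m=1$, since $A$ is $1$-diagonal.
\item[\textbf{(2)}] Every other closed walk contains a nontrivial cycle $C'$ by Lemma~\ref{lem.closed-walk-has-cycle}. Its $A$-weight is therefore a multiple of $w_A\tup{C'}=0$, just as in the proof of Proposition~\ref{prop.strong-powers}.
\end{enumerate}
Hence $\tup{A^m}_{i,i}=1$. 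The case $m=0$ is trivial, since $A^0=I_n$ is $1$-nullcyclic.

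There is no real obstacle here. The only point to check is that the multiplicity argument (a walk's weight is divisible by the weight of any cycle it contains) uses the arcs of $C'$ as distinct factors within the walk's product. This holds because the cycle's arcs are taken from distinct positions of the walk, exactly as in the earlier proof.
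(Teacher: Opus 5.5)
Your proposal is correct and follows the paper's proof essentially verbatim. You get nullcyclicity of $A^m$ from Proposition~\ref{prop.strong-powers}, and $1$-diagonality from the walk expansion: the stationary walk contributes $a_{i,i}^m=1$, and every other closed walk contains a nontrivial cycle by Lemma~\ref{lem.closed-walk-has-cycle}, so its weight is $0$. The paper also records your alternative via \cite[Theorem 5.2]{GrinbergPrincipal} in the remark immediately after its proof.
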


\begin{proof}
Let $m\in\mathbb{N}$. The case $m=0$ is again clear, since $A^{0}=I_{n}$.
Assume $m>0$.

We know from Proposition~\ref{prop.strong-powers} that $A^m$ is
nullcyclic; thus, it remains to show that $A^m$ is $1$-diagonal.
In other words, we must show that all diagonal entries of $A^m$
are $1$.

Write the $n\times n$-matrix $A$ as $A=\tup{a_{i,j}}_{i,j\in\ive{n}}$.
We also use $B_{i,j}$ to refer to the $\left(  i,j\right)  $-th entry of
any matrix $B$; thus, $A_{i,j}=a_{i,j}$ for all $i,j\in\ive{n}$.

The matrix $A$ is $1$-nullcyclic, hence $1$-diagonal.
That is, all diagonal entries of $A$ are $1$: For
each $i\in\ive{n}$, we have
\begin{equation}
a_{i,i} = 1.
\label{pf.lem.strong-powers.aii}
\end{equation}

Fix $i \in \ive{n}$. The equality \eqref{pf.lem.strong-powers.2} shows that
$\left(  A^{m}\right)_{i,i}$ is the sum of the $A$-weights of
all length-$m$ walks from $i$ to $i$.
The stationary walk $i=i\rightarrow i\rightarrow\cdots\rightarrow i=i$
contributes $1$ to this sum, since its $A$-weight is
$a_{i,i}a_{i,i}\cdots a_{i,i}=a_{i,i}^{m}=1$ (by
\eqref{pf.lem.strong-powers.aii}).
Each of the other length-$m$ walks from $i$ to $i$ contains a nontrivial
cycle by Lemma~\ref{lem.closed-walk-has-cycle} (since it is a closed walk but
not stationary).
Since the latter cycle has $A$-weight $0$ (because $A$ is
nullcyclic), we conclude that the walk that contains it must have
$A$-weight $0$ as well (indeed, since $R$ is commutative, the $A$-weight of the
cycle is a factor of the $A$-weight of the walk).
Thus, $\left(  A^{m}\right)_{i,i}$ is the sum of a single $1$
(corresponding to the stationary walk
$i=i\rightarrow i\rightarrow\cdots\rightarrow i=i$) and a lot of $0$'s (coming
from all the other walks). Therefore,
\begin{equation}
\left(  A^{m}\right)_{i,i}=1.
\label{pf.lem.strong-powers.3}
\end{equation}

Forget that we fixed $i$. So we have proved \eqref{pf.lem.strong-powers.3} for
each $i \in \ive{n}$.
In other words, all diagonal entries of $A^m$ are $1$.
This completes the proof.
\end{proof}

\begin{remark}
Alternatively, we could have proved Proposition~\ref{prop.strong-powers2}
as follows:
Proposition~\ref{prop.strong-implies-principled2} shows that $A$
is $1$-principled. Thus,
\cite[Theorem 5.2]{GrinbergPrincipal} (which says that any
power of a $1$-principled matrix is $1$-diagonal) shows that $A^m$
is $1$-diagonal.
But Proposition~\ref{prop.strong-powers} shows that $A^m$ is
nullcyclic. Combining these two results, we conclude that $A^m$
is $1$-nullcyclic.
\end{remark}

We have shown in Proposition~\ref{prop.strong-powers2} that
$1$-nullcyclic matrices are stable under nonnegative powers. We now extend
this to all integer powers. The key observation is that a $1$-nullcyclic
matrix is unipotent with nilpotent off-diagonal part.

\begin{lemma}
\label{lem.1-nullcyclic-nilpotent-part}
Let $A=\tup{a_{i,j}}_{i,j\in\ive{n}}$ be a $1$-nullcyclic matrix over a
commutative ring $R$, and set $N:=A-I_n$. Then:

\begin{enumerate}

\item[\textbf{(a)}] Every closed walk
$i_{0}\rightarrow i_{1}\rightarrow\cdots\rightarrow i_{m}=i_{0}$ in $K_n^\to$
of length $m>0$ has $N$-weight\footnote{The $N$-weight of a
walk $i_{0}\rightarrow i_{1}\rightarrow\cdots\rightarrow i_{m}$
is defined to be $N_{i_0, i_1} N_{i_1, i_2} \cdots N_{i_{m-1}, i_m}$,
where $N_{i,j}$ denotes the $\tup{i,j}$-th entry of $N$.} $0$.

\item[\textbf{(b)}] We have $N^n = 0$.

\item[\textbf{(c)}] The matrix $A$ is invertible, and its inverse
$A^{-1}$ is $1$-nullcyclic.

\end{enumerate}

\end{lemma}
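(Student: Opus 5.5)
For part \textbf{(a)}, note that $N$ has entries $N_{i,i}=0$ and $N_{i,j}=a_{i,j}$ for $i\neq j$. Take a closed walk $i_0\rightarrow i_1\rightarrow\cdots\rightarrow i_m=i_0$ with $m>0$. If any step is a loop $i_{t}=i_{t+1}$, the factor $N_{i_t,i_t}=0$ kills the $N$-weight. Otherwise every step is a non-loop, so the walk is not stationary. By Lemma~\ref{lem.closed-walk-has-cycle} it then contains a nontrivial cycle $C'$. The $N$-weight of $C'$ equals its $A$-weight, since all its arcs are off-diagonal, and this weight is $0$ because $A$ is nullcyclic. The $N$-weight of the walk is a multiple of that of $C'$ (by commutativity), hence $0$.

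For part \textbf{(b)}, apply the walk formula \eqref{pf.lem.strong-powers.1} to $N$ with exponent $n$. This gives $(N^n)_{i,j}$ as the sum of $N$-weights of length-$n$ walks from $i$ to $j$. We may discard walks using a loop, since those have weight $0$. A walk of length $n$ visits $n+1$ vertices among only $n$, so some vertex repeats: $i_s=i_t$ with $s<t$. The segment $i_s\to\cdots\to i_t$ is then a closed walk of positive length. By (a) its $N$-weight is $0$, and this weight divides the weight of the whole walk. Hence every term vanishes and $N^n=0$. This repetition/pigeonhole step is the one place needing care, but (a) handles closed subwalks uniformly, so I expect no real obstacle.

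For part \textbf{(c)}, since $N$ is nilpotent, $A=I_n+N$ is invertible with
\[
A^{-1}=\sum_{r=0}^{n-1}(-1)^rN^r .
\]
It remains to check that $A^{-1}$ is $1$-nullcyclic. For the diagonal, $(N^r)_{i,i}$ with $r>0$ is a sum of $N$-weights of closed walks of positive length, which vanish by (a). Hence $(A^{-1})_{i,i}=1$.

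For a nontrivial cycle $C=(i_1,\ldots,i_k)$, write $M=A^{-1}-I_n=\sum_{r\ge1}(-1)^rN^r$. Since $C$ has only off-diagonal arcs, we have $w_{A^{-1}}(C)=\prod_j M_{i_j,i_{j+1}}$. Expanding, this is a signed sum of products $\prod_j (N^{r_j})_{i_j,i_{j+1}}$ with all $r_j\ge1$. Each such product is a sum of $N$-weights of concatenated closed walks of total length $\sum r_j>0$, which vanish by (a). Thus $w_{A^{-1}}(C)=0$, so $A^{-1}$ is nullcyclic, and together with the diagonal computation it is $1$-nullcyclic.
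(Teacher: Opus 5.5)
Your proof is correct and follows the paper's argument essentially step for step. You prove \textbf{(a)} via Lemma~\ref{lem.closed-walk-has-cycle} after disposing of loops, \textbf{(b)} via pigeonhole on length-$n$ walks combined with \textbf{(a)}, and \textbf{(c)} via the truncated geometric series, expanding the cycle weight of $A^{-1}$ into $N$-weights of concatenated closed walks of positive length. The only differences are cosmetic: you split on ``contains a loop'' rather than ``stationary'' in \textbf{(a)}, truncate the series at $n-1$ instead of $n$, and subtract $I_n$ to discard the $r=0$ terms.
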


\begin{proof}
We will use the notation $B_{i,j}$ for the $\tup{i,j}$-th entry of
any matrix $B$.

The matrix $A$ is $1$-nullcyclic, thus $1$-diagonal.
That is, every diagonal entry $A_{i,i}$ of $A$ is $1$.
Hence, every diagonal entry $N_{i,i}$ of $N$ is $0$
(since $N = A - I_n$ and therefore $N_{i,i} = A_{i,i} - 1$).
On the other hand, for any $i \neq j$, we have
$N_{i,j} = A_{i,j}$ (since $N = A - I_n$ differs from $A$
only in its diagonal entries).
Thus, if a walk of $K_n^\to$ has no loops, then its
$N$-weight equals its $A$-weight.
In particular, any nontrivial cycle on $\ive{n}$ has
$N$-weight $0$ (because it has no loops, so its
$N$-weight equals its $A$-weight; but its $A$-weight is
$0$ because $A$ is nullcyclic).
\medskip

\textbf{(a)} Let
\[
i_0 \to i_1 \to \cdots \to i_m=i_0
\]
be a closed walk of positive length $m$. If this closed walk is stationary,
then all its arcs are loops, and its $N$-weight is $0$, since every diagonal
entry of $N$ is $0$.

If the closed walk is not stationary, then it contains a nontrivial cycle by
Lemma~\ref{lem.closed-walk-has-cycle}. The $N$-weight of this cycle is
$0$ (since any nontrivial cycle on $\ive{n}$ has
$N$-weight $0$). Hence the $N$-weight of the whole closed walk is
$0$ as well, since the $N$-weight of the cycle is a factor of it%
\footnote{Here we are using the commutativity of $R$.}.
This proves part \textbf{(a)}.
\medskip

\textbf{(b)} Let $i,j\in\ive{n}$. The walk expansion
\eqref{pf.lem.strong-powers.2} (applied to $N$ and $n$ instead of $A$
and $m$) expresses the
entry $\left(N^n\right)_{i,j}$ as the sum of the $N$-weights of all
length-$n$ walks
$i = i_{0}\rightarrow i_{1}\rightarrow\cdots\rightarrow i_n = j$
of $K_n^\to$ from $i$ to $j$. Every such walk has a repeated vertex
(by the pigeonhole principle, since $K_n^\to$ has only $n$ vertices),
and therefore contains a closed subwalk of positive length. By part
\textbf{(a)}, this closed subwalk has $N$-weight $0$.
Thus the whole walk has $N$-weight $0$ as well.
Since $\left(N^n\right)_{i,j}$ has been expressed as the sum of
the weights of such walks, we thus conclude that
$\left(N^n\right)_{i,j} = 0$.
Hence all entries of $N^{n}$ are $0$, proving $N^{n}=0$.
This proves part \textbf{(b)}.
\medskip

\textbf{(c)} By part \textbf{(b)}, we have $N^n=0$. Hence
$A=I_n+N$ is invertible, with inverse
\begin{equation}
A^{-1}=\sum_{r=0}^{n} \tup{-N}^r
\label{eq.lem.1-nullcyclic-nilpotent-part.inverse}
\end{equation}
(by the geometric series formula, here truncated to a finite
sum since $-N$ is nilpotent).
(We could replace the upper limit of this sum by $n-1$,
but we have no need for this.)

We first show that $A^{-1}$ is $1$-diagonal. Let $i\in\ive{n}$. From
\eqref{eq.lem.1-nullcyclic-nilpotent-part.inverse}, we obtain
\begin{align}
\tup{A^{-1}}_{i,i}
&= \left(\sum_{r=0}^{n} \tup{-N}^r\right)_{i,i}
= \sum_{r=0}^{n} \left(-1\right)^r \left(N^r\right)_{i,i}
\nonumber\\
&= 1+\sum_{r=1}^{n} \tup{-1}^r \left(N^r\right)_{i,i}.
\label{pf.lem.1-nullcyclic-nilpotent-part.c.1}
\end{align}
For each $r\geq 1$, the walk expansion \eqref{pf.lem.strong-powers.2}
(applied to $N$ and $r$ instead of $A$ and $m$)
expresses $\left(N^r\right)_{i,i}$ as a sum of $N$-weights of
positive-length closed walks. These weights are all $0$ by part
\textbf{(a)}. Hence, \eqref{pf.lem.1-nullcyclic-nilpotent-part.c.1}
simplifies to $\tup{A^{-1}}_{i,i} = 1$.
This shows that $A^{-1}$ is $1$-diagonal.

It remains to show that $A^{-1}$ is nullcyclic. Let
$C=\tup{i_1,i_2,\ldots,i_k}$ be a nontrivial cycle on $\ive{n}$, and set
$i_{k+1}:=i_1$.
We must show that $w_{A^{-1}}\tup{C} = 0$.

For each $j \in \ive{k}$, we can compare $\tup{i_j,i_{j+1}}$-entries in
\eqref{eq.lem.1-nullcyclic-nilpotent-part.inverse} to obtain
\begin{align*}
\tup{A^{-1}}_{i_j,i_{j+1}}
&= \left(\sum_{r=0}^{n} \tup{-N}^r\right)_{i_j,i_{j+1}}
= \sum_{r=0}^{n} \tup{-1}^r \left(N^r\right)_{i_j,i_{j+1}}
\\
&= \sum_{r=0}^{n} \tup{-1}^r \tup{\text{sum of the $N$-weights of all length-$r$ walks from $i_j$ to $i_{j+1}$}}
\end{align*}
(by \eqref{pf.lem.strong-powers.2}, applied to $N$ and $r$ instead of $A$ and $m$).
We can rewrite this further as
\begin{align}
\tup{A^{-1}}_{i_j,i_{j+1}}
= \tup{\text{sum of the signed $N$-weights of all length-$\leq n$ walks from $i_j$ to $i_{j+1}$}},
\nonumber
\end{align}
where the ``signed $N$-weight'' of a length-$r$ walk is defined to be $\tup{-1}^r$ times the $N$-weight of this walk.
Substituting this into
\[
w_{A^{-1}}\tup{C}
= \tup{A^{-1}}_{i_1,i_2} \tup{A^{-1}}_{i_2,i_3}\cdots \tup{A^{-1}}_{i_k,i_1}
= \prod_{j=1}^k \tup{A^{-1}}_{i_j,i_{j+1}},
\]
and then expanding the product, we express $w_{A^{-1}}\tup{C}$ as a sum
over $k$-tuples $\tup{W_1, W_2, \ldots, W_k}$ of walks from $i_j$ to $i_{j+1}$
for each $j \in \ive{k}$;
the addends of the sum are the products of the signed $N$-weights of these walks
$W_1, W_2, \ldots, W_k$.
As in the proof of Proposition~\ref{prop.strong-powers}, we can replace
these $k$-tuples $\tup{W_1, W_2, \ldots, W_k}$ by their concatenations
$W_1W_2\cdots W_k$, and then the addends are the signed $N$-weights of these
concatenations.
But each concatenation $W_1W_2\cdots W_k$ is a closed walk of positive length
(since it contains the two distinct vertices $i_1$ and $i_2$),
so its $N$-weight is $0$ by part \textbf{(a)}. Therefore all addends in the
expansion of $w_{A^{-1}}\tup{C}$ vanish, and thus $w_{A^{-1}}\tup{C}=0$.
This proves that
$A^{-1}$ is nullcyclic. Since $A^{-1}$ is also $1$-diagonal, we conclude that
$A^{-1}$ is $1$-nullcyclic.
This completes the proof of part \textbf{(c)}.
\end{proof}

\begin{proposition}
\label{prop.strong-integer-powers2}
Let $A$ be a $1$-nullcyclic matrix over a commutative ring $R$. Then $A^m$
is $1$-nullcyclic for every integer $m\in\ZZ$.
\end{proposition}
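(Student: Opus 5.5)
We split according to the sign of $m$. For $m \geq 0$ there is nothing new to do: Proposition~\ref{prop.strong-powers2} already says that $A^m$ is $1$-nullcyclic. (Note that $A$ is invertible by Lemma~\ref{lem.1-nullcyclic-nilpotent-part} \textbf{(c)}, so $A^m$ makes sense for negative $m$ as well.)

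Now let $m < 0$, and write $m = -p$ with $p \in \NN$. Lemma~\ref{lem.1-nullcyclic-nilpotent-part} \textbf{(c)} tells us that $A^{-1}$ is $1$-nullcyclic. We then apply Proposition~\ref{prop.strong-powers2} to the matrix $A^{-1}$ in place of $A$. This shows that $\tup{A^{-1}}^p$ is $1$-nullcyclic. Since $\tup{A^{-1}}^p = A^{-p} = A^m$, we are done.

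There is no real obstacle at this stage. All the substantive work was done in Lemma~\ref{lem.1-nullcyclic-nilpotent-part}: the nilpotency $N^n = 0$ and the walk expansion of the geometric series $\sum_{r} \tup{-N}^r$ are what show that the inverse stays $1$-nullcyclic. The one point worth stating explicitly is the identity $\tup{A^{-1}}^p = A^{-p}$, which is simply the definition of negative powers of an invertible matrix.
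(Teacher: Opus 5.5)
Your proof is correct and follows the paper's argument essentially verbatim. The case $m \geq 0$ is Proposition~\ref{prop.strong-powers2}. For $m<0$ you apply that same proposition to $A^{-1}$, which is $1$-nullcyclic by Lemma~\ref{lem.1-nullcyclic-nilpotent-part}~\textbf{(c)}, and use $A^m = \tup{A^{-1}}^{-m}$.
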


\begin{proof}
For $m\in\NN$, this is Proposition~\ref{prop.strong-powers2}. Thus, it
remains to treat negative $m$.

Let $m<0$. By Lemma~\ref{lem.1-nullcyclic-nilpotent-part} \textbf{(c)}, the
matrix $A^{-1}$ is $1$-nullcyclic. Also, $-m\in\NN$, and
$A^m=\tup{A^{-1}}^{-m}$. Hence Proposition~\ref{prop.strong-powers2},
applied to $A^{-1}$ and $-m$, shows that $A^m$ is $1$-nullcyclic.
\end{proof}

\begin{corollary}
\label{cor.strong-integer-powers-principled2}
Let $A$ be a $1$-nullcyclic matrix over a commutative ring $R$. Then $A^m$
is $1$-principled for every integer $m\in\ZZ$.
\end{corollary}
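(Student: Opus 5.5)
The corollary follows by chaining two results already established, so the plan is a two-step reduction with no new computation. Let $A$ be a $1$-nullcyclic matrix over $R$ and let $m\in\ZZ$.

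First, I will apply Proposition~\ref{prop.strong-integer-powers2} to conclude that $A^m$ is $1$-nullcyclic. Here I need $A^m$ to make sense for negative $m$. This is supplied by Lemma~\ref{lem.1-nullcyclic-nilpotent-part} \textbf{(c)}, which shows that $A$ is invertible, and that proposition already handles the negative case through $A^m=\tup{A^{-1}}^{-m}$.

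Second, I will apply Proposition~\ref{prop.strong-implies-principled2} to the matrix $A^m$. Since $A^m$ is $1$-nullcyclic, that proposition shows it is $1$-principled, which is exactly the claim.

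The only point that needs any attention is the bookkeeping for negative exponents, namely that $A^m$ is defined and that the first step applies to all $m\in\ZZ$. Both of these were settled by the earlier results, so I expect no real obstacle. The whole proof is essentially the same as that of Corollary~\ref{cor.strong-powers-principled}, with Proposition~\ref{prop.strong-integer-powers2} in place of Proposition~\ref{prop.strong-powers} and Proposition~\ref{prop.strong-implies-principled2} in place of Proposition~\ref{prop.strong-implies-principled}.
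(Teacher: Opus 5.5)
Your proposal is correct and matches the paper's proof exactly: Proposition~\ref{prop.strong-integer-powers2} shows that $A^m$ is $1$-nullcyclic for every $m\in\ZZ$, and Proposition~\ref{prop.strong-implies-principled2} then shows that $A^m$ is $1$-principled. Your remark that negative exponents need no extra care is also right, since Lemma~\ref{lem.1-nullcyclic-nilpotent-part}~\textbf{(c)} already makes $A$ invertible.
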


\begin{proof}
The matrix $A^m$ is $1$-nullcyclic by
Proposition~\ref{prop.strong-integer-powers2}, and therefore is
$1$-principled by Proposition~\ref{prop.strong-implies-principled2}.
\end{proof}

\section{Integral closure and cycle weights}\label{sec.integral-closure}

We now turn towards the harder, ``upstream'' direction, from
($1$-)principled to ($1$-)nullcyclic. As we already mentioned,
in general, a $1$-principled matrix is not always
nullcyclic; a counterexample is constructed in the footnote
in \cite[\S 6]{GrinbergPrincipal}\footnote{We recall this
counterexample:
If $F$ is any field, and if $R$ is the ring
$F\left[  x,y\right]  \diagup
\left(  x^3+y^3, xy, x^4, y^4 \right)$, and if
$A$ is the matrix
$\begin{pmatrix}
1 & 1 & 0 & 0\\
0 & 1 & y & x\\
x & 0 & 1 & y\\
y & 0 & x & 1
\end{pmatrix}$ over $R$, then $A$ is $1$-principled,
but $A^2$ is not, since $\det \left(A^2\right)_{\{2,3\}}
= 1 - x^3 \neq 1$ in $R$.
This counterexample is minimal in the sense that
there are no matrices of size $\leq 3$ that are
$1$-principled but have non-$1$-principled powers.}.
However, ($1$-)principledness implies a weaker version of
($1$-)nullcyclicity, which we will later leverage to obtain the
full property under certain conditions.

We will need the classical notion of \emph{integrality} over an ideal,
which we will now briefly recall;
see \cite{HunekeSwanson} for a much more extensive treatment.

\begin{definition}
Let $R$ be a commutative ring, let $I$ be an ideal of $R$, and let $x\in R$.
We say that $x$ is \emph{integral over $I$} if there exist a positive integer
$d$ and elements $c_j\in I^j$ for all $j\in\ive{d}$ such that
\[
x^d+c_1x^{d-1}+c_2x^{d-2}+\cdots+c_dx^0=0.
\]
The set of all elements of $R$ that are integral over $I$ is called the
\emph{integral closure} of $I$ and is denoted by $\overline I$. The ideal $I$
is said to be \emph{integrally closed} if $\overline I=I$.
\end{definition}

Thus, an element is integral over the zero ideal if and only if it is
nilpotent. We shall use the following standard properties of integral closure
of ideals.

\begin{lemma}
\label{lem.integral-closure-basics}
Let $R$ be a commutative ring, and let $I$ and $J$ be ideals of $R$. Then:
\begin{enumerate}
\item[\textbf{(a)}] The set $\overline I$ is an ideal of $R$ containing $I$
as a subset.

\item[\textbf{(b)}] If $I\subseteq J$, then $\overline I\subseteq\overline J$.

\item[\textbf{(c)}] The ideal $\overline I$ is integrally closed; that is,
$\overline{\overline I}=\overline I$.

\item[\textbf{(d)}] If $I\subseteq J\subseteq\overline I$, then
$\overline J=\overline I$.
\end{enumerate}
\end{lemma}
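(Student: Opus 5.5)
These are standard facts from the theory of integral closure (see \cite{HunekeSwanson}); I would either cite them or prove them along the following lines. The key tool is a module-theoretic criterion for integrality over an ideal. Consider the Rees algebra $R[It]=\bigoplus_{j\geq0} I^j t^j\subseteq R[t]$. An element $x\in R$ is integral over $I$ if and only if $xt\in R[t]$ is integral, in the ring sense, over the subring $R[It]$. Indeed, an equation $x^d+c_1x^{d-1}+\cdots+c_d=0$ with $c_j\in I^j$ becomes, after multiplying by $t^d$, the monic equation $(xt)^d+(c_1t)(xt)^{d-1}+\cdots+c_dt^d=0$ with coefficients $c_jt^j\in R[It]$. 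Conversely, given any integral equation for $xt$ over $R[It]$, extracting the homogeneous component of degree $d$ in $t$ yields an equation of the required form. This homogeneous extraction is the one point that needs a little care.

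With this criterion, part \textbf{(a)} follows from the fact that the integral closure of $R[It]$ in $R[t]$ is a subring. Hence if $xt$ and $yt$ are integral, so are $xt+yt=(x+y)t$ and $r\cdot xt=(rx)t$ for $r\in R$, since $r\in R\subseteq R[It]$. The inclusion $I\subseteq\overline I$ is trivial: $x\in I$ satisfies $x^1-x=0$ with $c_1=-x\in I$. Part \textbf{(b)} is immediate from the definition, since $I^j\subseteq J^j$.

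For \textbf{(c)}, suppose $x$ is integral over $\overline I$, with $x^d+\sum_j c_jx^{d-j}=0$ and $c_j\in\overline I^{\,j}$. The main step is to show $\overline I^{\,j}\subseteq\overline{I^j}$. This holds because a product of elements $y_1,\ldots,y_j$ of $\overline I$ is integral over $I^j$: each $y_it$ is integral over $R[It]$, so their product $y_1\cdots y_j t^j$ is integral too, and a graded version of the criterion in degree $j$ finishes it. Then $c_jt^j$ is integral over $R[It]$ for every $j$, so $R[It][c_1t,\ldots,c_dt^d]$ is a finite extension of $R[It]$. Over that extension, $xt$ is integral. By transitivity of integrality, $xt$ is integral over $R[It]$, so $x\in\overline I$.

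Part \textbf{(d)} then follows formally. From \textbf{(b)} we get $\overline I\subseteq\overline J\subseteq\overline{\overline I}$, and by \textbf{(c)} the right-hand side equals $\overline I$.

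I expect the main obstacle to be the graded bookkeeping: checking that integrality of a degree-$j$ element $yt^j$ over $R[It]$ forces $y$ to satisfy an equation with coefficients in $(I^j)^k$. This again comes from taking homogeneous components, this time in degree $jd$.
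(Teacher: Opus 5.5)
Your proof is correct.

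Parts (b) and (d) coincide with the paper's proof. The paper calls (b) trivial and derives (d) by exactly your chain $\overline I\subseteq\overline J\subseteq\overline{\overline I}=\overline I$. For (a) and (c), the paper gives no argument and simply cites \cite[Corollary 1.3.1]{HunekeSwanson}. You instead supply a self-contained proof: you translate ``$x$ is integral over $I$'' into ``$xt$ is integral over the Rees algebra $R[It]\subseteq R[t]$''. Your route reduces everything to two standard facts about integral ring extensions (integral elements form a subring, and integrality is transitive), plus one extraction of the $t^d$-coefficient in the polynomial ring $R[t]$. The citation buys only brevity. Your version makes the lemma independent of the reference.

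One simplification. In (c) you need neither the inclusion $\overline I^{\,j}\subseteq\overline{I^j}$ nor any degree-$j$ bookkeeping. Write $c_j\in\overline I^{\,j}$ as a finite sum of products $y_1\cdots y_j$ with all $y_i\in\overline I$. Then $c_jt^j$ is a finite sum of terms $(y_1t)\cdots(y_jt)$. This is integral over $R[It]$ directly, because each $y_it$ is (by the easy direction of your criterion) and integral elements form a ring. The graded extraction is needed only in the converse direction of the criterion, applied once to $xt$ in degree $d$ at the very end. So the obstacle you anticipate in your last paragraph never arises.
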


\begin{proof}
Parts \textbf{(a)} and \textbf{(c)} are
\cite[Corollary 1.3.1]{HunekeSwanson}.
Part \textbf{(b)} is trivial.
Part \textbf{(d)} follows from \textbf{(b)} and \textbf{(c)}: the
inclusions $I\subseteq J\subseteq\overline I$ yield
\[
\overline I\subseteq\overline J\subseteq
\overline{\overline I}=\overline I.
\qedhere
\]
\end{proof}

We next need a combinatorial ingredient. A \emph{Hamilton cycle on a
finite set $S$} means a cycle on $S$ whose vertices are all the elements
of $S$. In other words, it means a Hamilton
cycle of $K_S^\to$, where $K_S^\to$ is the digraph whose vertices are the
elements of $S$ and whose arcs are all the pairs $\tup{s,t}$ for
$s,t\in S$. We observe two obvious facts:
\begin{enumerate}
\item Each Hamilton cycle on a subset $S$ of $\ive{n}$
    is a cycle on $\ive{n}$.
    Conversely, each cycle on $\ive{n}$ is a Hamilton cycle on
    $S$, where $S$ is the set of vertices of this cycle.
\item A cycle on a finite set $S$ is Hamilton if and only
    if it has length $\abs{S}$.
\end{enumerate}

We identify each cycle with its set of arcs (since the latter set
uniquely determines the former cycle).
The \emph{multiset union} $H \uplus H'$ of two cycles $H$ and $H'$
is defined to be a multidigraph (i.e., a directed multigraph) whose multiset
of arcs is obtained by combining the sets of arcs of $H$ and of $H'$.
(If an arc appears in both $H$ and $H'$, then it will appear twice in this
multiset union.)

\begin{lemma}
\label{lem.two-Hamilton-cycles}
Let $S$ be a finite set, and let $H$ and $H'$ be two distinct Hamilton cycles
on $S$. Then the multiset union of (the sets of arcs of) $H$ and $H'$ can be
partitioned into at least three cycles, each having length strictly smaller
than $\left|S\right|$.
\end{lemma}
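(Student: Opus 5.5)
Note first that three or more cycles come for free. The multiset union $D:=H\uplus H'$ has exactly $2\abs{S}$ arcs, so any partition of $D$ into cycles that are all shorter than $\abs{S}$ must contain at least three cycles, since two such cycles have fewer than $2\abs{S}$ arcs in total. Note also that $\abs{S}\geq 2$, because a set with one element has only one Hamilton cycle. Hence neither $H$ nor $H'$ has loops, and neither does $D$. So the whole task is to \emph{partition $D$ into cycles of length $\leq \abs{S}-1$}.

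I will encode $H$ and $H'$ as cyclic permutations $\sigma,\tau$ of $S$, with arcs $\tup{v,\sigma(v)}$ and $\tup{v,\tau(v)}$. Then $D$ is balanced: every vertex has in-degree $2$ and out-degree $2$. Any balanced loopless multidigraph can be partitioned into nontrivial cycles. To see this, repeatedly follow unused arcs from a vertex until a vertex repeats, extract that cycle as in Lemma~\ref{lem.closed-walk-has-cycle}, and continue; the remainder stays balanced.

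\textbf{Step 1: the first short cycle.} Since $\sigma\neq\tau$, pick $a\in S$ with $b:=\tau(a)\neq\sigma(a)$. Also $b\neq a$. Let $C_1$ consist of the $\tau$-arc $\tup{a,b}$ followed by the $\sigma$-path $b\to\sigma(b)\to\cdots\to a$. Its length is $1+\tup{\abs{S}-d}$, where $d\geq 2$ is the $\sigma$-distance from $a$ to $b$. So $2\leq\abs{C_1}\leq\abs{S}-1$.

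\textbf{Step 2: the remainder.} The remainder $D\setminus C_1$ consists of the $\sigma$-path $a\to\sigma(a)\to\cdots\to b$ together with the $\tau$-arcs other than $\tup{a,b}$. It is again balanced, and every vertex has in- and out-degree $1$ or $2$.

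\textbf{Step 3: the extremal argument (main obstacle).} Among all partitions of $D$ into cycles, I will choose one with the maximum number of cycles, and then show that no cycle in it has length $\abs{S}$. Suppose some cycle $K$ of this partition is Hamilton. Then $D\setminus K$ has in- and out-degree $1$ at every vertex, so it is a fixed-point-free permutation $\rho$.
\begin{itemize}
\item If $\rho$ is not a single $\abs{S}$-cycle, I will try to refine the partition by re-pairing arcs at a vertex, as described below.
\item If $\rho$ is Hamilton, then $D=K\uplus\rho$, and $K\neq\rho$ (otherwise every arc of $D$ would have multiplicity $2$, forcing $H=H'$). Step 1, applied to the pair $\tup{K,\rho}$ instead of $\tup{H,H'}$, then produces a short cycle $C_1$.
\end{itemize}
The refinement I have in mind is a two-arc switch. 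Reconnecting two arcs $\tup{x,K(x)}$ and $\tup{y,K(y)}$ of a single cycle into $\tup{x,K(y)}$ and $\tup{y,K(x)}$ splits that cycle into two. I would apply such switches at a vertex where the two in-arcs (or the two out-arcs) come from different cycles, so that the number of cycles strictly increases, contradicting maximality.

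The delicate point, where I expect the real work, is making this re-pairing legitimate. The two arcs exchanged at a vertex must come from the two different layers $K$ and $\rho$, and the pieces obtained must again be cycles, i.e.\ have no repeated vertices. If a piece is only a closed trail, I would split it further into cycles by Lemma~\ref{lem.closed-walk-has-cycle}. That only increases the count, so it is harmless for the extremal argument.
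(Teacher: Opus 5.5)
Your reduction is correct: it suffices to partition $H\uplus H'$ into cycles of length $\leq\abs{S}-1$, and the count then gives at least three. Your Step~1 is also correct. Step~3, however, has a genuine gap, and the statement it is meant to prove is false: a partition with the maximum number of cycles can contain a Hamilton cycle.

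Take $S=\ive{6}$, $H=\tup{1,4,5,2,3,6}$ and $H'=\tup{1,2,5,6,3,4}$. Then $H\uplus H'$ consists of the twelve distinct arcs $\tup{v,v+1}$ and $\tup{v,v+3}$ (indices mod $6$). A cycle with $p$ steps of type $+1$ and $q$ steps of type $+3$ has total displacement $p+3q$, which is a positive multiple of $6$. Summing over any partition gives $6+3\cdot 6=24$, so every partition has at most $4$ cycles. Hence $\set{\tup{1,2,3,4,5,6},\tup{1,4},\tup{2,5},\tup{3,6}}$ is a maximum partition containing a Hamilton cycle $K$. Here $\rho=\tup{1,4}\tup{2,5}\tup{3,6}$ is not Hamilton, which is exactly your first bullet, where you hoped to increase the count strictly. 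The maximum is also attained by the short partition $\set{\tup{1,4},\tup{2,5},\tup{3,6,1,2},\tup{6,3,4,5}}$. So at best a tie-breaking argument could survive, and you do not have one.

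The mechanisms you sketch could not have worked in any case. The switch $\tup{x,K(x)},\tup{y,K(y)}\mapsto\tup{x,K(y)},\tup{y,K(x)}$ creates arcs that are not in $H\uplus H'$. Re-pairing the two transitions at a vertex whose two passages lie in different cycles merges those cycles into one closed trail, so the swap itself lowers the count. Nothing guarantees that re-splitting recovers more than was lost, and in the example above it cannot.

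The missing idea is a second short cycle through the same vertex. Besides your $C_1$ (the $\tau$-arc $\tup{a,\tau(a)}$ followed by the $\sigma$-path back to $a$), take the $\sigma$-arc $\tup{a,\sigma(a)}$ followed by the $\tau$-path from $\sigma(a)$ back to $a$.

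\begin{enumerate}
\item The same distance argument as in Step~1 shows this cycle has length $\leq\abs{S}-1$.
\item It is arc-disjoint from $C_1$. Each of the two cycles uses only the out-arc of $a$ from its own layer, while its path in the other layer ends at $a$.
\item Together the two cycles use up both out-arcs of $a$.
\end{enumerate}

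The remainder is balanced, so it splits into cycles. None of these can pass through $a$, since $a$ has out-degree $0$ in the remainder. Hence all of them have length $\leq\abs{S}-1$. This is the paper's proof, and it makes the extremal argument unnecessary.
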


\begin{example}
Let $S=\left\{1,2,3,4,5\right\}$, and let
\[
H=\tup{1,2,3,4,5}
\qquad\text{and}\qquad
H'=\tup{1,3,5,2,4}
\]
be two Hamilton cycles on $S$. Their multiset union has arcs
\begin{align*}
\underbrace{(1,2),(2,3),(3,4),(4,5),(5,1)}_{\text{arcs of }H},
\underbrace{(1,3),(3,5),(5,2),(2,4),(4,1)}_{\text{arcs of }H'}.
\end{align*}
These ten arcs can be partitioned into the following three cycles:
\[
\tup{1,2,4},\qquad \tup{1,3,4,5},\qquad \tup{2,3,5}.
\]
The following two pictures show the same multidigraph $H\uplus H'$ twice.
In the left picture, the arcs of $H$ are red and the arcs of $H'$ are blue
and thick.
In the right picture, the three cycles just listed are drawn in different
colors and also different styles.
\begin{center}
\begin{tikzpicture}[scale=1.15,
  vertex/.style={circle,draw,fill=white,inner sep=1.2pt,font=\footnotesize},
  arr/.style={-{Latex[length=2mm]},line width=0.8pt}]
\foreach \name/\theta in {1/90,2/18,3/-54,4/-126,5/162}
  \node[vertex] (\name) at (\theta:1.35) {$\name$};
\foreach \u/\v in {1/2,2/3,3/4,4/5,5/1}
  \path[arr,draw=red] (\u) edge (\v);
\foreach \u/\v in {1/3,3/5,5/2,2/4,4/1}
  \path[arr,draw=blue, very thick] (\u) edge (\v);
\end{tikzpicture}
\qquad\qquad
\begin{tikzpicture}[scale=1.15,
  vertex/.style={circle,draw,fill=white,inner sep=1.2pt,font=\footnotesize},
  arr/.style={-{Latex[length=2mm]},line width=0.8pt}]
\foreach \name/\theta in {1/90,2/18,3/-54,4/-126,5/162}
  \node[vertex] (\name) at (\theta:1.35) {$\name$};
\foreach \u/\v in {1/2,2/4,4/1}
  \path[arr,draw=ForestGreen] (\u) edge (\v);
\foreach \u/\v in {1/3,3/4,4/5,5/1}
  \path[arr,draw=Purple, dashed] (\u) edge (\v);
\foreach \u/\v in {2/3,3/5,5/2}
  \path[arr,densely dotted] (\u) edge (\v);
\end{tikzpicture}
\end{center}
Indeed, these three cycles use the following arcs:
\begin{align*}
\tup{1,2,4} &: (1,2),(2,4),(4,1),\\
\tup{1,3,4,5} &: (1,3),(3,4),(4,5),(5,1),\\
\tup{2,3,5} &: (2,3),(3,5),(5,2).
\end{align*}
Thus each arc of $H\uplus H'$ is used exactly once. None of the three
cycles is Hamilton on $S$, since their lengths are $3,4,3$, respectively.
\end{example}

\begin{proof}[Proof of Lemma~\ref{lem.two-Hamilton-cycles}.]
Choose a vertex $v\in S$ whose outgoing arcs in $H$ and $H'$ are distinct.%
\footnote{Such a vertex must exist, since $H$ and $H'$ are distinct.}
Write these arcs as
\[
e=(v,a)\in H\qquad\text{and}\qquad e'=(v,b)\in H',
\]
where $a\neq b$. Let $P'$ be the directed path in $H'$ from $a$ back to $v$,
and let $P$ be the directed path in $H$ from $b$ back to $v$. Then the
walks
\begin{align*}
C &:= P'e \qquad \text{(that is, the path $P'$ followed by the arc $e$)}
\qquad \text{ and } \\
C' &:= Pe' \qquad \text{(that is, the path $P$ followed by the arc $e'$)}
\end{align*}
are cycles. Both have length at most $\left|S\right| - 1$. Indeed,
if the cycle $C$ had length $\geq \left|S\right|$, then it would be
Hamilton, and thus the path $P'$ would contain every vertex.
Hence the complementary path in $H'$ from $v$ to $a$ would consist
of a single arc. This arc would be the outgoing arc $(v,b)$ of $v$ in $H'$,
forcing $a=b$. The same argument applies to $C'$.

The cycles $C$ and $C'$ are arc-disjoint in the multidigraph
$H\uplus H'$.
Indeed:
\begin{enumerate}
\item The only $H$-arc used by $C$ is $e$, while the $H$-arcs
      used by $C'$ lie in $P$, which does not use $e$ because
      $P$ ends at $v$. Thus, $C$ and $C'$ have no $H$-arcs in
      common.
\item Similarly, $C$ and $C'$ have no $H'$-arcs in common.
\end{enumerate}

Remove the arcs of the two cycles $C$ and $C'$ from this multidigraph
$H\uplus H'$.
The remaining multidigraph is balanced\footnote{A
multidigraph is said to be \emph{balanced} if for each vertex $v$, the
indegree of $v$ equals the outdegree of $v$.}, because both the
original multidigraph $H\uplus H'$ and the removed union $C\cup C'$ are balanced.
Hence its arcs can be partitioned into cycles\footnote{We are using
a well-known result saying that the multiset of arcs of a balanced multidigraph
can be partitioned into cycles. This can be proved in many ways, e.g.:
Start walking at any non-isolated vertex; each time you enter a vertex,
the balancedness will ensure that you will be able to exit again; sooner
or later you will run into a cycle. Whenever this happens, remove the cycle
from the digraph, and repeat the same procedure. Removing a cycle leaves
the digraph balanced, so this algorithm will continue until the digraph
has no arcs left; at that point, the cycles obtained will form a partition
of the set of all arcs.}. None of these new cycles contains $v$, since
both outgoing arcs from $v$ in $H\uplus H'$
have been removed when we removed the arcs of $C$ and $C'$. Thus
every new cycle has length at most $\left|S\right| - 1$.

We have therefore partitioned the $2\left|S\right|$ arcs of
$H\uplus H'$ into cycles of length at most
$\left|S\right|-1$. Consequently, the number of these cycles
is at least
\[
\frac{2\left|S\right|}{\left|S\right|-1} > 2,
\]
that is, at least $3$ (since it is an integer).
This proves the lemma.
\end{proof}

Let $A=\tup{a_{i,j}}_{i,j\in\ive{n}}$ be a matrix over a commutative ring
$R$. For every integer $r\geq2$, let $K_{<r}(A)$ denote the ideal of $R$
generated by the $A$-weights of all nontrivial cycles on $\ive{n}$
whose length is strictly smaller than $r$.
Thus, $K_{<2}(A)=0$ (since a nontrivial cycle cannot have length
smaller than $2$).

\begin{lemma}
\label{lem.products-Hamilton-cycles}
Let $S\subseteq\ive{n}$ have size $r\geq2$, and let
$H_1,H_2,\ldots,H_t$ be $t$ distinct Hamilton cycles on $S$, where $t\geq2$.
Then
\[
w_A(H_1)w_A(H_2)\cdots w_A(H_t)\in K_{<r}(A)^t.
\]
\end{lemma}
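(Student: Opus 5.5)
The plan is to pair up the Hamilton cycles and apply Lemma~\ref{lem.two-Hamilton-cycles} to each pair. The basic observation is this. If a multiset of arcs is partitioned into cycles $C_1,\ldots,C_s$, then the product of the $A$-entries over that multiset equals $w_A(C_1)\cdots w_A(C_s)$. This holds because $R$ is commutative and weights are products over arcs.

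For any two distinct Hamilton cycles $H,H'$ on $S$, Lemma~\ref{lem.two-Hamilton-cycles} partitions $H\uplus H'$ into at least three cycles, each of length $<r$. I first check that each of these cycles is nontrivial. A trivial cycle would be a loop $(v,v)$, but Hamilton cycles on $S$ with $r\geq 2$ contain no loops, so no such cycle can occur. Hence
\[
w_A(H)\,w_A(H') = w_A(C_1)\cdots w_A(C_s)\in K_{<r}(A)^s\subseteq K_{<r}(A)^3 ,
\]
since $s\geq 3$.

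Now let $t\geq 2$. The case $t$ even is immediate. Split $H_1,\ldots,H_t$ into $t/2$ pairs of distinct cycles; each pair contributes a factor in $K_{<r}(A)^3\subseteq K_{<r}(A)^2$. The product then lies in $K_{<r}(A)^{t}$.

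For $t$ odd with $t\geq 3$, I pair $(H_1,H_2),\ldots,(H_{t-2},H_{t-1})$. These pairs give a product in $K_{<r}(A)^{3(t-1)/2}$. Since $t\geq3$ we have $3(t-1)/2\geq t$, so this product already lies in $K_{<r}(A)^t$. The leftover factor $w_A(H_t)$ is just an element of $R$, and multiplying by it stays inside the ideal $K_{<r}(A)^t$. Note that $3(t-1)/2\geq t$ is equivalent to $t\geq 3$, which holds here. Alternatively, one could use all three pairs $(H_1,H_2),(H_2,H_3),(H_1,H_3)$ via squaring, but that approach introduces square roots, so the direct pairing is cleaner.

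The only real subtlety, and the step I expect to need care, is turning a partition of the multiset $H\uplus H'$ into an equality of products of weights. This requires that the arc multiset of $H\uplus H'$ coincide exactly, with multiplicity, with the disjoint union of the arc sets of the $C_i$. That is precisely what ``partitioned'' means in Lemma~\ref{lem.two-Hamilton-cycles}, so the equality holds.
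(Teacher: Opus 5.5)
Your proposal is correct and follows essentially the same route as the paper: apply Lemma~\ref{lem.two-Hamilton-cycles} to each pair of distinct Hamilton cycles to get $w_A(H_i)w_A(H_j)\in K_{<r}(A)^3$ (noting that the resulting cycles are loop-free, hence nontrivial), then pair off the cycles and, for odd $t$, absorb the leftover factor. The paper packages your even/odd case split into the single inequality $3\left\lfloor t/2\right\rfloor\geq t$ for $t\geq 2$.
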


\begin{proof}
Pick any $i \neq j$ in $\ive{t}$.
By Lemma~\ref{lem.two-Hamilton-cycles}, the multiset union of the two distinct
Hamilton cycles $H_i$ and $H_j$ on $S$ can be partitioned into at least three
cycles of length strictly smaller than $r$. These latter cycles are all nontrivial,
since they cannot contain any loops (indeed, all their arcs must be arcs of the
original two Hamilton cycles, but those did not contain any loops).
Hence, their $A$-weights belong to $K_{<r}(A)$.
Since the product of the $A$-weights is unchanged by repartitioning the same
multiset of arcs, this shows that
\begin{align}
w_A(H_i)w_A(H_j)\in K_{<r}(A)^3.
\label{pf.lem.products-Hamilton-cycles.4}
\end{align}
Thus, we have proved \eqref{pf.lem.products-Hamilton-cycles.4} for any $i \neq j$
in $\ive{t}$.

Now, pair off $2\left\lfloor t/2\right\rfloor$ of the $t$ Hamilton cycles
$H_1,H_2,\ldots,H_t$. Multiplying
the inclusions \eqref{pf.lem.products-Hamilton-cycles.4} for these pairs,
and multiplying by the $A$-weight of the remaining cycle if $t$ is odd, gives
\[
w_A(H_1)w_A(H_2)\cdots w_A(H_t)
\in K_{<r}(A)^{3\left\lfloor t/2\right\rfloor}.
\]
Since $3\left\lfloor t/2\right\rfloor\geq t$ for every $t\geq2$, the right
hand side is contained in $K_{<r}(A)^t$.
\end{proof}

We can now state the universal algebraic result.
For any matrix $A=\tup{a_{i,j}}_{i,j\in\ive{n}}$ over any
commutative ring $R$, we define the
\emph{principal-minor-defect ideal} of $A$ to be the ideal
\[
J(A):=\left(\det A_S-\prod_{i\in S}a_{i,i}
\ \middle|\ S\subseteq\ive{n}\right)
\subseteq R
\]
(that is, the ideal of $R$ generated by all $2^n$ differences
$\det A_S - \prod_{i\in S}a_{i,i}$,
where $S$ ranges over the subsets of $\ive{n}$).
Note that the generators $\det A_S-\prod_{i\in S}a_{i,i}$
of this ideal for subsets $S$ satisfying $\abs{S}\leq 1$
are zero, since the determinant of a $1\times 1$-matrix or
of a $0\times 0$-matrix is the product of its diagonal
entries.

\begin{theorem}
\label{thm.cycle-weight-integral}
Let $A$ be an $n\times n$-matrix over a commutative ring $R$.
Then the $A$-weight of every
nontrivial cycle on $\ive{n}$ is integral over $J(A)$. Equivalently,
every nontrivial cycle $C$ on $\ive{n}$ satisfies
\[
w_A(C)\in\overline{J(A)}.
\]
\end{theorem}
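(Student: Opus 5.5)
Let $J := J(A)$. I will prove by strong induction on $r \geq 2$ that $K_{<r}(A) \subseteq \overline{J}$. The base case $r = 2$ is trivial, since $K_{<2}(A) = 0$. The theorem then follows by taking $r = n+1$, because every nontrivial cycle has length at most $n$.

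For the induction step, assume $K_{<r}(A) \subseteq \overline{J}$. Fix a set $S \subseteq \ive{n}$ with $\abs{S} = r$, and list the Hamilton cycles on $S$ as $H_1, \ldots, H_N$. It suffices to show that each $w_A(H_i)$ lies in $\overline{J}$.

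\textbf{Expanding the minor.} I expand $\det A_S$ by permutations, exactly as in the proof of Proposition~\ref{prop.strong-implies-principled}, and sort the non-identity permutations into two kinds.

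The $r$-cycles (the Hamilton cycles on $S$) contribute $(-1)^{r-1}\sum_i w_A(H_i)$.

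Every other non-identity permutation has all its nontrivial cycles of length $<r$. Its term is therefore a product of diagonal entries with at least one factor $w_A(C)$, where $C$ is a nontrivial cycle of length $<r$. Such a term lies in $K_{<r}(A)$.

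Hence
\[
\sum_i w_A(H_i) \in J + K_{<r}(A) =: J'.
\]
By the induction hypothesis, $J \subseteq J' \subseteq \overline{J}$, so Lemma~\ref{lem.integral-closure-basics}(d) gives $\overline{J'} = \overline{J}$. It therefore suffices to show that each $w_A(H_i)$ is integral over $J'$.

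\textbf{Integrality via a polynomial.} Set $x_i := w_A(H_i)$ and consider
\[
P(T) := \prod_{i=1}^N (T - x_i) = T^N + \sum_{t=1}^N (-1)^t e_t(x_1, \ldots, x_N)\, T^{N-t}.
\]
Each $x_i$ is a root of $P$, so I need $e_t \in J'^t$ for every $t$.

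For $t = 1$, we have $e_1 = \sum_i x_i \in J'$ by the expansion above.

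For $t \geq 2$, every monomial of $e_t$ is a product of $t$ distinct $x_i$'s. By Lemma~\ref{lem.products-Hamilton-cycles} each such product lies in $K_{<r}(A)^t \subseteq J'^t$. So $e_t \in J'^t$.

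Thus each $x_i$ satisfies an equation of integral dependence over $J'$, so $x_i \in \overline{J'} = \overline{J}$. This completes the induction.

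\textbf{Expected difficulty.} The one step needing care is the reduction to $J'$. Membership $x_i \in \overline{J}$ is not directly produced by the polynomial $P$, whose coefficients lie in powers of $J'$ rather than of $J$. It is Lemma~\ref{lem.integral-closure-basics}(d) that converts integrality over $J'$ into integrality over $J$, and this is what makes the induction go through. The combinatorial heart of the argument is already contained in Lemma~\ref{lem.products-Hamilton-cycles}: it is what lets the elementary symmetric functions $e_t$ with $t \geq 2$ land in $J'^t$, even though only the first one, $e_1$, is controlled by the minor.
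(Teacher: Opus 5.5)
Your proof is correct and follows the paper's argument essentially step by step. It uses the same induction on the cycle length $r$, the same determinant expansion giving $e_1 \in J(A)+K_{<r}(A)$, Lemma~\ref{lem.products-Hamilton-cycles} for $e_t$ with $t\geq 2$, and Lemma~\ref{lem.integral-closure-basics}~\textbf{(d)} to pass from integrality over $J(A)+K_{<r}(A)$ to integrality over $J(A)$. The only differences are cosmetic: you phrase the induction hypothesis as $K_{<r}(A)\subseteq\overline{J(A)}$, and you work with $w_A(H_i)$ rather than the signed $z_H=(-1)^{r-1}w_A(H)$, which is harmless since $J(A)+K_{<r}(A)$ is an ideal.
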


\begin{proof}
We must show that each nontrivial cycle $H$ on $\ive{n}$
satisfies $w_A\tup{H} \in \overline{J(A)}$.
In other words, we must show that for each $r\geq 2$ and each
$r$-element subset $S$ of $\ive{n}$, each Hamilton cycle $H$ on $S$
satisfies $w_A\tup{H} \in \overline{J(A)}$
(since any nontrivial cycle is a Hamilton cycle on a subset of
size $\geq 2$).

We use strong induction on $r$.
Fix an $r$-element subset
$S\subseteq\ive{n}$, where $r\geq2$, and let $\mathcal H_S$ be the set of all
Hamilton cycles on $S$. For every $H\in\mathcal H_S$, set
\[
z_H:=(-1)^{r-1}w_A(H).
\]
The sign $(-1)^{r-1}$ is the sign of the cyclic permutation corresponding to
$H$ (that is, of the permutation of $S$ that sends each vertex of $H$
to the next vertex that follows it on $H$).

Set
\[
K:=K_{<r}(A)
\qquad\text{and}\qquad
L:=J(A)+K.
\]
We first show that each $z_H$ is integral over $L$. Let
$e_t$ be the $t$-th elementary symmetric polynomial in the elements
$z_H$ for $H\in\mathcal H_S$.
In particular, $e_0 = 1$ and $e_1 = \sum_{H\in\mathcal H_S}z_H$.

We shall now use the determinant expansion
\eqref{eq.lem.strong-implies-principled.1} to show that
\begin{equation}
e_1 \in L.
\label{eq.Hamilton-e1-in-L}
\end{equation}

\begin{proof}[Proof of \eqref{eq.Hamilton-e1-in-L}.]
Each addend on the right-hand side of
\eqref{eq.lem.strong-implies-principled.1}
corresponds to some permutation $\pi \in \Symm_S$.
We can thus classify these addends into three classes:
\begin{enumerate}
\item[Class 1] consists of the single addend for the
      permutation $\pi = \id \in \Symm_S$.
      This addend is
      \[
      \sgn(\id) \prod_{i\in S}a_{i,\id(i)} = \prod_{i\in S}a_{i,i}.
      \]
\item[Class 2] consists of the addends corresponding
      to permutations $\pi$ that are $r$-cycles.
      Such permutations $\pi$ are in a one-to-one correspondence
      with the Hamilton cycles $H \in \mathcal{H}_S$
      (indeed, each such permutation $\pi$ gives rise to the
      Hamilton cycle
      $\tup{\pi^0(s), \pi^1(s), \ldots, \pi^{r-1}(s)}$ for an
      arbitrary $s\in S$; the choice of $s$ is immaterial).
      The addend corresponding to a given
      Hamilton cycle $H \in \mathcal{H}_S$ is
      \[
      \sgn\tup{\pi} \prod_{i\in S} a_{i, \pi(i)}
      = \tup{-1}^{r-1} \prod_{k=0}^{r-1} a_{\pi^k(s), \pi^{k+1}(s)}
      = \tup{-1}^{r-1} w_A\tup{H} = z_H.
      \]
\item[Class 3] consists of the addends corresponding to
      permutations $\pi \neq \id$ that have at least two
      cycles.
      For such a permutation $\pi$, all cycles of $\pi$ must
      have length smaller than $r$ (since a cycle of
      length $\geq r$ would cover all elements of $S$, thus
      leaving no room for a second cycle), and at least one
      of these cycles must be nontrivial (because $\pi \neq \id$).
      Therefore, the corresponding addend contains the $A$-weight
      of a nontrivial cycle of length strictly smaller than $r$
      as a factor; hence, this addend belongs to
      $K_{<r}(A) = K \subseteq L$, thus is $\equiv 0 \mod L$.
\end{enumerate}
Hence, reduced modulo $L$, the equality
\eqref{eq.lem.strong-implies-principled.1} becomes
\begin{align*}
\det A_{S}
&\equiv
\underbrace{\prod_{i\in S}a_{i,i}}_{\text{sum of addends of Class 1}}
+ \underbrace{\sum_{H \in \mathcal{H}_S} z_H}_{\text{sum of addends of Class 2}}
+ \underbrace{0}_{\text{sum of addends of Class 3}} \\
&= \prod_{i\in S}a_{i,i} + \sum_{H \in \mathcal{H}_S} z_H \mod L.
\end{align*}
Therefore,
\[
\sum_{H \in \mathcal{H}_S} z_H \equiv \det A_S - \prod_{i\in S}a_{i,i} \equiv 0 \mod L
\]
(since the definition of $J(A)$ yields
$\det A_{S} - \prod_{i\in S}a_{i,i} \in J(A) \subseteq L$).
That is, $\sum_{H \in \mathcal{H}_S} z_H \in L$.
In other words, $e_1 \in L$
(since $e_1 = \sum_{H \in \mathcal{H}_S} z_H$).
This proves \eqref{eq.Hamilton-e1-in-L}.
\end{proof}

Now we shall generalize \eqref{eq.Hamilton-e1-in-L}
by showing that
\begin{align}
e_t \in L^t \qquad \text{ for each } t \geq 0.
\label{eq.Hamilton-et-in-Lt}
\end{align}

\begin{proof}[Proof of \eqref{eq.Hamilton-et-in-Lt}.]
If $t = 0$, then this is obvious (since $L^0 = R$).
If $t = 1$, then it follows from \eqref{eq.Hamilton-e1-in-L}.
Thus, assume that $t \geq 2$ henceforth.
Now, $e_t$ is defined as the sum of the $t$-wise products
of the $z_H$'s with $H \in \mathcal{H}_S$.
Each addend in this sum is, up to sign, a product of the
$A$-weights of $t$ distinct Hamilton cycles on $S$
(since the $z_H$'s are, up to sign, the $A$-weights of these
cycles).
But Lemma~\ref{lem.products-Hamilton-cycles} shows that
each such product belongs to $K_{<r}(A)^t$.
Therefore, their sum $e_t$ belongs to $K_{<r}(A)^t$ as well,
and thus also to $L^t$
(since $K_{<r}(A) = K \subseteq L$).
This proves \eqref{eq.Hamilton-et-in-Lt}.
\end{proof}

But Vi\`ete's formulas show that every $z_H$ is a root of
the monic polynomial
\[
\prod_{G\in\mathcal H_S}(X-z_G)
=X^{\left|\mathcal H_S\right|}-e_1X^{\left|\mathcal H_S\right|-1}
+e_2X^{\left|\mathcal H_S\right|-2}-\cdots
\in R\left[X\right],
\]
whose $X^{\left|\mathcal H_S\right|-t}$-coefficient belongs to $L^t$
(by \eqref{eq.Hamilton-et-in-Lt}).
Thus every $z_H$ is integral over $L$.
Therefore, every $w_A(H)$ is integral over $L$ as well (since
$z_H$ is $w_A(H)$ up to sign).
In other words,
\begin{align}
w_A(H) \in \overline L
\qquad \text{ for each }
H \in \mathcal{H}_S.
\label{eq.wAH-in-L1}
\end{align}

By the induction hypothesis, the $A$-weights of all nontrivial cycles of
length strictly smaller than $r$ belong to $\overline{J(A)}$. Since
$\overline{J(A)}$ is an ideal, this yields
\[
K\subseteq\overline{J(A)}
\]
(since $K = K_{<r}(A)$ is the ideal generated by these $A$-weights).
Hence
$L = J(A)+K \subseteq \overline{J(A)}$
(since $J(A) \subseteq \overline{J(A)}$ and $K\subseteq\overline{J(A)}$).
Thus,
\[
J(A) \subseteq L \subseteq \overline{J(A)}.
\]
Hence, Lemma~\ref{lem.integral-closure-basics} \textbf{(d)} now
gives $\overline{L} = \overline{J(A)}$.
Thus, \eqref{eq.wAH-in-L1} rewrites as
\begin{align*}
w_A(H) \in \overline{J(A)}
\qquad \text{ for each }
H \in \mathcal{H}_S.
\end{align*}
In other words, each Hamilton cycle $H$ on $S$ satisfies
$w_A(H) \in \overline{J(A)}$.
This completes the induction.
\end{proof}

\begin{remark}
The relation between principal minors and cycle weights has an
algebro-geometric antecedent in the work of Lin and Sturmfels
\cite[Proposition~4, Lemma~6 and Proposition~7]{LinSturmfels}.  Over
$\mathbb C$, they describe the change of coordinates between principal
minors and cycle-sums, give a cycle decomposition closely related to
Lemma~\ref{lem.two-Hamilton-cycles}, and prove that the algebra generated by
the individual cycle monomials is integral over the algebra generated by the
principal minors.  In particular, \cite[Lemma~6]{LinSturmfels} is a
stronger version of our Lemma~\ref{lem.two-Hamilton-cycles},
whereas \cite[Proposition~7]{LinSturmfels} is an analogue of
our Theorem~\ref{thm.cycle-weight-integral} for integrality over
rings rather than over ideals.
\end{remark}

\begin{corollary}
\label{cor.principled-cycle-nilpotent}
Let $A$ be a principled $n\times n$-matrix over a
commutative ring $R$. Then the
$A$-weight of every nontrivial cycle on $\ive{n}$ is nilpotent.
\end{corollary}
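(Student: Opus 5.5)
The corollary follows directly from Theorem~\ref{thm.cycle-weight-integral} once we observe that the principal-minor-defect ideal of a principled matrix is the zero ideal.

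First, we note that if $A$ is principled, then every generator $\det A_S-\prod_{i\in S}a_{i,i}$ of $J(A)$ vanishes by the very definition of principledness. Hence $J(A)=0$.

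Next, we apply Theorem~\ref{thm.cycle-weight-integral}. For every nontrivial cycle $C$ on $\ive{n}$ it gives $w_A(C)\in\overline{J(A)}=\overline{0}$.

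Finally, we unpack what it means to be integral over the zero ideal. Suppose $x$ is integral over $0$. Then there are $d\geq1$ and elements $c_j\in 0^j$ for $j\in\ive{d}$ with $x^d+c_1x^{d-1}+\cdots+c_d=0$. Since $j\geq1$, the power $0^j$ is the zero ideal, so every $c_j$ is $0$. The relation therefore reduces to $x^d=0$, so $x$ is nilpotent. This is exactly the remark made right after the definition of integrality.

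Applying this to $x=w_A(C)$ shows that the $A$-weight of every nontrivial cycle is nilpotent.

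There is no real obstacle here. The only point requiring any care is that the indices $j$ in the integrality relation start at $1$, so every coefficient $c_j$ lies in a positive power of the zero ideal and is therefore $0$.
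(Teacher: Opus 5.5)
Your proof is correct and follows the paper's argument exactly: principledness makes every generator of $J(A)$ vanish, Theorem~\ref{thm.cycle-weight-integral} then puts each nontrivial cycle weight in $\overline{0}$, and integrality over the zero ideal means nilpotence. Your check that each $c_j \in 0^j = 0$ for $j \geq 1$ only spells out the remark the paper makes right after the definition of integrality.
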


\begin{proof}
Since $A$ is principled, each subset $S$ of $\ive{n}$ satisfies
$\det A_S = \prod_{i\in S}a_{i,i}$ and therefore
$\det A_S-\prod_{i\in S}a_{i,i} = 0$.
Thus, $J(A)=0$ (since we have just shown that all
generators of $J(A)$ are $0$).
Hence, Theorem~\ref{thm.cycle-weight-integral} shows that
every nontrivial cycle's $A$-weight is integral over the
zero ideal, and therefore nilpotent.
\end{proof}

\begin{corollary}
\label{cor.reduced-principled-strong}
Over a reduced commutative ring, every principled matrix is
nullcyclic. Consequently, every power of a principled matrix over a
reduced ring is principled.
\end{corollary}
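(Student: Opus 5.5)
The plan is to combine Corollary~\ref{cor.principled-cycle-nilpotent} with the fact that a reduced ring has no nonzero nilpotents, and then feed the result into the nullcyclic machinery of Section~2.

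Let $R$ be reduced and let $A$ be a principled $n\times n$-matrix over $R$.

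\begin{enumerate}
\item By Corollary~\ref{cor.principled-cycle-nilpotent}, the $A$-weight $w_A(C)$ of every nontrivial cycle $C$ on $\ive{n}$ is nilpotent.
\item Since $R$ is reduced, its only nilpotent element is $0$. Hence $w_A(C)=0$ for every nontrivial cycle $C$, which is exactly the statement that $A$ is nullcyclic. This proves the first claim.
\item For the second claim, let $m\in\NN$. Since $A$ is nullcyclic, Corollary~\ref{cor.strong-powers-principled} (that is, Proposition~\ref{prop.strong-powers} followed by Proposition~\ref{prop.strong-implies-principled}) shows that $A^m$ is principled.
\end{enumerate}

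Here "every power" is read as every nonnegative power. A principled matrix need not be invertible, so negative powers are not defined in general; this matches the scope of Theorem~\ref{thm.main}.

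There is no real obstacle. All the substantive work is in Theorem~\ref{thm.cycle-weight-integral}, which gives Corollary~\ref{cor.principled-cycle-nilpotent}. The only thing to check is the step used in that corollary: an element integral over the zero ideal satisfies $x^d=0$, because every $c_j$ lies in $0^j=0$.
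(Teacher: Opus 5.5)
Your proof is correct and follows the paper's argument exactly: Corollary~\ref{cor.principled-cycle-nilpotent} plus reducedness gives nullcyclicity, and then Corollary~\ref{cor.strong-powers-principled} shows that every power $A^m$ with $m\in\NN$ is principled. Reading ``every power'' as every nonnegative power is also the intended scope, since a principled matrix need not be invertible.
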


\begin{proof}
A reduced ring has no nonzero nilpotent elements, so the first claim follows
from Corollary~\ref{cor.principled-cycle-nilpotent}. The second then follows
from Corollary~\ref{cor.strong-powers-principled}.
\end{proof}

\begin{corollary}
\label{cor.reduced-principled-strong2}
Over a reduced commutative ring, every $1$-principled matrix is
$1$-nullcyclic. Consequently, every power of a $1$-principled matrix over a
reduced ring is $1$-principled.
\end{corollary}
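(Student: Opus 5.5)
The plan is to reduce everything to the non-$1$ versions just established, using Proposition~\ref{prop.1-princ} to move between $1$-principled and ``$1$-diagonal and principled''.

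For the first claim, let $A$ be a $1$-principled matrix over a reduced commutative ring $R$. By Proposition~\ref{prop.1-princ}, $A$ is $1$-diagonal and principled. Since $A$ is principled and $R$ is reduced, Corollary~\ref{cor.reduced-principled-strong} shows that $A$ is nullcyclic. This step carries all the real content, via Corollary~\ref{cor.principled-cycle-nilpotent} and hence Theorem~\ref{thm.cycle-weight-integral}. Being both $1$-diagonal and nullcyclic, $A$ is by definition $1$-nullcyclic.

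For the second claim, the ``every power'' should include negative powers, matching the $1$-side of the paper. A $1$-principled matrix has determinant $1$, so it is invertible and $A^m$ is defined for every $m\in\ZZ$. Since $A$ is $1$-nullcyclic by the first part, Corollary~\ref{cor.strong-integer-powers-principled2} shows that $A^m$ is $1$-principled for every $m\in\ZZ$. Equivalently, one can combine Proposition~\ref{prop.strong-integer-powers2} with Proposition~\ref{prop.strong-implies-principled2}.

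I do not expect a genuine obstacle here. The heavy lifting (integrality of cycle weights over $J(A)$, the nilpotence that follows when $J(A)=0$, and the stability of $1$-nullcyclicity under inverses and powers) is already done in the cited results. The only point needing care is to note that $m$ may be negative, so that Corollary~\ref{cor.strong-integer-powers-principled2} must be invoked rather than the $\NN$-only Corollary~\ref{cor.strong-powers-principled}.
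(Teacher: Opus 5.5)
Your proposal is correct and follows essentially the same route as the paper. The paper also uses Proposition~\ref{prop.1-princ} to split $1$-principled into $1$-diagonal and principled, then Corollary~\ref{cor.reduced-principled-strong} for nullcyclicity, and then Corollary~\ref{cor.strong-integer-powers-principled2} for all integer powers.
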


\begin{proof}
Every $1$-principled matrix is principled (by
Proposition~\ref{prop.1-princ}). Hence,
the first claim of the corollary follows from the first claim of
Corollary~\ref{cor.reduced-principled-strong}, combined with the fact
that every $1$-principled matrix is $1$-diagonal (see
Proposition~\ref{prop.1-princ}).
The second claim then follows
from Corollary~\ref{cor.strong-integer-powers-principled2}.
\end{proof}

\section{Quotients by integrally closed ideals}

\subsection{The general case}

The universal theorems from the preceding section have the following immediate
consequence.

\begin{theorem}
\label{thm.integrally-closed-quotient-strong}
Let $D$ be a commutative ring, let $I$ be an integrally closed ideal of
$D$, and set $R:=D/I$. Let $A$ be a principled matrix over $R$. Then $A$
is nullcyclic.
\end{theorem}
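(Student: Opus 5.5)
The plan is to lift $A$ to a matrix over $D$ and apply Theorem~\ref{thm.cycle-weight-integral} there, rather than over $R$. Let $\rho : D \to R$ be the canonical projection. Choose any matrix $\widetilde{A} = \tup{\widetilde a_{i,j}}_{i,j\in\ive{n}}$ over $D$ with $\rho\tup{\widetilde a_{i,j}} = a_{i,j}$ for all $i,j$. Since $\rho$ is a ring homomorphism, it commutes with determinants and products. Hence for every $S\subseteq\ive{n}$ we get
\[
\rho\tup{\det \widetilde A_S - \prod_{i\in S}\widetilde a_{i,i}} = \det A_S - \prod_{i\in S} a_{i,i} = 0,
\]
because $A$ is principled. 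Thus every generator of $J\tup{\widetilde A}$ lies in $\ker\rho = I$, and so $J\tup{\widetilde A}\subseteq I$.

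Now let $C$ be a nontrivial cycle on $\ive{n}$. Theorem~\ref{thm.cycle-weight-integral}, applied to $\widetilde A$ over $D$, gives $w_{\widetilde A}(C) \in \overline{J\tup{\widetilde A}}$. Lemma~\ref{lem.integral-closure-basics} \textbf{(b)} and $J\tup{\widetilde A}\subseteq I$ then give $\overline{J\tup{\widetilde A}} \subseteq \overline I = I$, where the last equality holds because $I$ is integrally closed. Therefore $w_{\widetilde A}(C)\in I$. Since $\rho$ preserves products, $\rho\tup{w_{\widetilde A}(C)} = w_A(C)$, and so $w_A(C) = 0$ in $R$. This holds for every nontrivial cycle $C$, so $A$ is nullcyclic.

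There is no real obstacle. The one step that needs care is working in $D$ instead of $R$: over $R$ itself, Corollary~\ref{cor.principled-cycle-nilpotent} only yields nilpotence, and nothing says $0$ is integrally closed in $R$. The integral-closedness of $I$ in $D$ is precisely what turns integrality of $w_{\widetilde A}(C)$ over $J\tup{\widetilde A}$ into membership in $I$.
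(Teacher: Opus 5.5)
Your proposal is correct and matches the paper's proof essentially verbatim: you lift $A$ to $\widetilde A$ over $D$, observe $J(\widetilde A)\subseteq I$, apply Theorem~\ref{thm.cycle-weight-integral} together with Lemma~\ref{lem.integral-closure-basics}~\textbf{(b)} and $\overline I = I$, and reduce modulo $I$. Your closing remark, that one must work in $D$ rather than $R$ (where one only gets nilpotence), is also apt.
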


\begin{proof}
Choose a matrix $\widetilde A = \tup{\widetilde a_{i,j}}_{i, j \in \ive{n}}$
over $D$ whose image modulo $I$ is $A$. Since
$A$ is principled, we have
\[
\det \widetilde A_S - \prod_{i\in S} \widetilde a_{i,i} \in I
\qquad\text{for every }S\subseteq\ive{n}.
\]
Thus, $J(\widetilde A) \subseteq I$. By
Theorem~\ref{thm.cycle-weight-integral}, every nontrivial cycle
$C$ on $\ive{n}$ satisfies
$w_{\widetilde A}(C) \in \overline{J(\widetilde A)} \subseteq
\overline{I}$ (by Lemma~\ref{lem.integral-closure-basics} (b),
since $J(\widetilde A)\subseteq I$)
and therefore $w_{\widetilde A}(C) \in \overline{I} = I$
(since $I$ is integrally closed).
Reducing modulo $I$, we obtain $w_A\tup{C} = 0$
for every nontrivial cycle $C$.
Hence $A$ is nullcyclic.
\end{proof}

\begin{theorem}
\label{thm.integrally-closed-quotient-strong2}
Let $D$ be a commutative ring, let $I$ be an integrally closed ideal of
$D$, and set $R:=D/I$. Let $A$ be a $1$-principled matrix over $R$. Then $A$
is $1$-nullcyclic.
\end{theorem}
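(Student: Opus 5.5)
The plan is to reduce Theorem~\ref{thm.integrally-closed-quotient-strong2} directly to Theorem~\ref{thm.integrally-closed-quotient-strong} by means of Proposition~\ref{prop.1-princ}. There are two properties to establish for $A$: that it is $1$-diagonal, and that it is nullcyclic.

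First, since $A$ is $1$-principled, Proposition~\ref{prop.1-princ} (direction $\Longrightarrow$) shows that $A$ is both $1$-diagonal and principled.

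Second, because $A$ is a principled matrix over $R=D/I$ with $I$ integrally closed, Theorem~\ref{thm.integrally-closed-quotient-strong} applies verbatim and shows that $A$ is nullcyclic. That argument lifts $A$ to a matrix $\widetilde A$ over $D$, notes $J(\widetilde A)\subseteq I$, and invokes Theorem~\ref{thm.cycle-weight-integral} together with $\overline I=I$.

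Combining the two facts, $A$ is $1$-diagonal and nullcyclic, which is exactly the definition of $1$-nullcyclic.

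There is essentially no obstacle here. All the difficulty was absorbed earlier, in the integrality statement Theorem~\ref{thm.cycle-weight-integral} and its specialization Theorem~\ref{thm.integrally-closed-quotient-strong}. The only point worth checking is that the $1$-diagonal condition survives: it is a hypothesis on $A$ itself, not on a lift, so nothing about $I$ needs to be used for it.
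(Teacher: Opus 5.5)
Your proposal is correct and matches the paper's proof exactly: Proposition~\ref{prop.1-princ} splits $1$-principled into $1$-diagonal plus principled, and Theorem~\ref{thm.integrally-closed-quotient-strong} upgrades principled to nullcyclic. Your remark that the $1$-diagonal condition holds for $A$ itself and needs no use of $I$ is also accurate.
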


\begin{proof}
The matrix $A$ is $1$-principled.
By Proposition~\ref{prop.1-princ}, this entails that
$A$ is $1$-diagonal and principled.
Hence,
Theorem~\ref{thm.integrally-closed-quotient-strong} shows that
$A$ is nullcyclic. Since $A$ is $1$-diagonal, we thus conclude
that $A$ is $1$-nullcyclic.
\end{proof}

\begin{proof}[Proof of Theorem~\ref{thm.main2}]
By Theorem~\ref{thm.integrally-closed-quotient-strong2}, the matrix $A$ is
$1$-nullcyclic. Hence Corollary~\ref{cor.strong-integer-powers-principled2}
shows that $A^m$ is $1$-principled for every $m\in\mathbb Z$.
\end{proof}

\begin{proof}[Proof of Theorem~\ref{thm.main}]
By Theorem~\ref{thm.integrally-closed-quotient-strong}, the matrix $A$ is
nullcyclic. Hence Corollary~\ref{cor.strong-powers-principled}
shows that $A^m$ is principled for every $m\in\mathbb N$.
\end{proof}

\subsection{\texorpdfstring{The rings $\mathbb Z/d$}{The rings Z/d}}

The case of the ring $\mathbb Z/d = \ZZ / d \ZZ$ admits a particularly
elementary application of Theorem~\ref{thm.cycle-weight-integral}.
This relies on the following well-known fact, which we prove here
for the sake of completeness:

\begin{lemma}
\label{lem.Z-ideals-integrally-closed}
Every ideal of $\mathbb Z$ is integrally closed.
\end{lemma}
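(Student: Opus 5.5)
To show that every ideal of $\mathbb Z$ is integrally closed, we use that every ideal of $\mathbb Z$ is principal, say $I = d\mathbb Z$ with $d \in \mathbb N$. The case $d = 0$ is immediate. Here $\overline{I}$ consists of the elements integral over the zero ideal, namely the nilpotent elements of $\mathbb Z$, and since $\mathbb Z$ is a domain the only such element is $0$. So from now on we assume $d \geq 1$, take an $x \in \mathbb Z$ that is integral over $d\mathbb Z$, and aim to prove $d \mid x$.

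By the definition of integrality there are a positive integer $k$ and elements $c_j \in (d\mathbb Z)^j = d^j \mathbb Z$ with
\[
x^k + c_1 x^{k-1} + \cdots + c_k = 0 .
\]
Writing $c_j = d^j b_j$ with $b_j \in \mathbb Z$, we will work with the rational number $y := x/d$. Dividing the equation by $d^k$ gives
\[
y^k + b_1 y^{k-1} + \cdots + b_k = 0 ,
\]
so $y$ is a rational root of a monic polynomial with integer coefficients. By the rational root theorem, equivalently the fact that $\mathbb Z$ is integrally closed in $\mathbb Q$, this forces $y \in \mathbb Z$. Hence $d \mid x$, that is, $x \in I$. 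This proves $\overline I \subseteq I$, and the reverse inclusion is Lemma~\ref{lem.integral-closure-basics} \textbf{(a)}.

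To keep the argument self-contained we will also verify the rational root step directly. Write $y = p/q$ in lowest terms with $q > 0$ and multiply the equation by $q^k$. Every term except $p^k$ is then divisible by $q$, so $q \mid p^k$. Since $\gcd(p,q) = 1$, this gives $q = 1$.

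There is no serious obstacle in this argument. The only point needing care is the substitution $c_j = d^j b_j$, which is what turns integrality over the ideal into integrality of $x/d$ over $\mathbb Z$. Together with handling $d = 0$ separately, that is all the proof requires.
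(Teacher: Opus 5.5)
Your proposal is correct and matches the paper's proof essentially step for step. Both handle $d=0$ via nilpotents, then substitute $c_j = d^j b_j$, pass to $y = x/d$, and finish with the lowest-terms argument showing $q \mid p^k$ forces $q = 1$.
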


\begin{proof}
Every ideal of $\mathbb Z$ has the form $d\mathbb Z$ for some
$d\in\mathbb N$. The claim is clear for $d=0$, since the only nilpotent
integer is $0$. Assume that $d>0$, and let $x\in\mathbb Z$ be integral over
$d\mathbb Z$. Thus, for some positive integer $r$, we have
\begin{align}
x^r+c_1x^{r-1}+\cdots+c_rx^0=0
\qquad \text{with }c_j\in \tup{d\ZZ}^j = d^j\mathbb Z.
\label{pf.lem.Z-ideals-integrally-closed.1}
\end{align}
Write $c_j=d^jb_j$ with $b_j\in\mathbb Z$, and put $y=x/d\in\mathbb Q$.
Dividing the equation \eqref{pf.lem.Z-ideals-integrally-closed.1}
by $d^r$ gives
\begin{align}
y^r+b_1y^{r-1}+\cdots+b_ry^0=0.
\label{pf.lem.Z-ideals-integrally-closed.2}
\end{align}
Thus $y$ is a rational root of a monic polynomial in $\mathbb Z[X]$. To see
directly that $y\in\mathbb Z$, write $y=a/b$ in lowest terms with $b>0$.
After multiplication by $b^r$, the equation
\eqref{pf.lem.Z-ideals-integrally-closed.2} shows that $b\mid a^r$.
Since $\gcd(a,b)=1$, this forces $b=1$. Hence $y\in\mathbb Z$, so
$x=dy\in d\mathbb Z$.
\end{proof}

\begin{corollary}
\label{cor.zmod-n}
Let $d$ be an integer, and let $A$ be a $1$-principled matrix over
$\mathbb Z/d\mathbb Z$. Then $A^m$ is $1$-principled for every
$m\in\mathbb Z$.
\end{corollary}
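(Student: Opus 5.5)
The plan is to apply Theorem~\ref{thm.main2} with $D=\mathbb Z$ and $I=d\mathbb Z$. First, I will replace $d$ by $\abs{d}$ if necessary, so that we may assume $d\in\NN$; this changes neither the ideal $d\mathbb Z$ nor the ring $\mathbb Z/d\mathbb Z$. Next, Lemma~\ref{lem.Z-ideals-integrally-closed} shows that the ideal $I=d\mathbb Z$ of $D=\mathbb Z$ is integrally closed, and the quotient $D/I$ is exactly the ring $\mathbb Z/d\mathbb Z$ over which $A$ lives. Hence all hypotheses of Theorem~\ref{thm.main2} hold, and that theorem yields that $A^m$ is $1$-principled for every $m\in\mathbb Z$.

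If one prefers to make the "particularly elementary" route explicit, the same argument can be unrolled. Lift $A$ to an integer matrix $\widetilde A$. Then $J(\widetilde A)\subseteq d\mathbb Z$, so Theorem~\ref{thm.cycle-weight-integral} and Lemma~\ref{lem.integral-closure-basics} \textbf{(b)} place every nontrivial cycle weight $w_{\widetilde A}(C)$ in $\overline{d\mathbb Z}=d\mathbb Z$. Reducing modulo $d$, this says that $A$ is nullcyclic, and it is $1$-diagonal by Proposition~\ref{prop.1-princ}, so $A$ is $1$-nullcyclic. Corollary~\ref{cor.strong-integer-powers-principled2} then finishes the proof.

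There is no real obstacle here. The only point to check is that negative $m$ make sense: $A$ is invertible because $\det A=1$, and Theorem~\ref{thm.main2} already covers all $m\in\mathbb Z$. The degenerate cases $d=0$ (where the ring is $\mathbb Z$, which is reduced) and $d=\pm1$ (the zero ring) are included automatically, since Lemma~\ref{lem.Z-ideals-integrally-closed} treats every ideal of $\mathbb Z$.
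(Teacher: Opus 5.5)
Your proposal is correct and is exactly the paper's proof: Lemma~\ref{lem.Z-ideals-integrally-closed} shows that $d\mathbb Z$ is integrally closed in $\mathbb Z$, and then Theorem~\ref{thm.main2} with $D=\mathbb Z$ and $I=d\mathbb Z$ gives the claim for all $m\in\mathbb Z$. Your optional ``unrolled'' version just retraces the paper's own proof of Theorem~\ref{thm.main2}, going through Theorem~\ref{thm.cycle-weight-integral}, Theorem~\ref{thm.integrally-closed-quotient-strong2} and Corollary~\ref{cor.strong-integer-powers-principled2}.
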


\begin{proof}
The ideal $d \ZZ$ of $\ZZ$ is integrally closed by
Lemma~\ref{lem.Z-ideals-integrally-closed}.
Hence, Theorem~\ref{thm.main2}
(applied to $D = \ZZ$ and $I = d\ZZ$)
shows that $A^m$ is $1$-principled for every $m\in\mathbb Z$.
\end{proof}

\begin{remark}
Our above proof of Corollary~\ref{cor.zmod-n} is
self-contained.
Indeed, the only result we have used without proof is
Lemma~\ref{lem.integral-closure-basics}, which in this
case is being applied to $R = \ZZ$; but all claims of
this lemma
follow from Lemma~\ref{lem.Z-ideals-integrally-closed}
when $R = \ZZ$.
\end{remark}

The reader can easily state and prove an analogue of
Corollary~\ref{cor.zmod-n} with ``$1$-principled'' replaced
by ``principled''.

\subsection{Pr\"ufer domains}

We next give a broad class of domains for which every ideal is integrally
closed.

\begin{definition}
Let $D$ be an integral domain with fraction field $K$. A nonzero fractional
ideal $L$ of $D$ is called \emph{invertible} if there exists a fractional
ideal $M$ such that $LM=D$. The domain $D$ is called a \emph{Pr\"ufer domain}
if every nonzero finitely generated ideal of $D$ is invertible.
\end{definition}

For background on
Pr\"ufer domains and their equivalent characterizations, see
\cite[\S XII.3]{LomQui} and \cite{FontanaHuckabaPapick}.

\begin{lemma}
\label{lem.Prufer-ideals-integrally-closed}
Every ideal of a Pr\"ufer domain is integrally closed.
\end{lemma}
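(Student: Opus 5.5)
Let $D$ be a Pr\"ufer domain with fraction field $K$, let $I$ be an ideal of $D$, and let $x \in D$ be integral over $I$. We need to show $x \in I$. The case $I = 0$ is immediate: $x$ is then nilpotent, hence $0$, since $D$ is a domain. So assume $I \neq 0$.

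\emph{Step 1: reduce to a finitely generated ideal.} Fix an equation
\[
x^d + c_1 x^{d-1} + \cdots + c_d = 0 \qquad \text{with } c_j \in I^j .
\]
Each $c_j$ is a finite sum of products of $j$ elements of $I$. Let $I_0 \subseteq I$ be the ideal generated by all the elements of $I$ appearing in these finitely many products. Then $I_0$ is a nonzero finitely generated ideal (we may throw in one nonzero element of $I$ to guarantee nonzeroness), each $c_j$ lies in $I_0^j$, and so $x$ is integral over $I_0$. It therefore suffices to show $x \in I_0$.

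\emph{Step 2: use invertibility.} Since $D$ is Pr\"ufer, $I_0$ is invertible; let $M$ be a fractional ideal with $I_0 M = D$. Consider the finitely generated $D$-submodule
\[
N := D + x M + x^2 M^2 + \cdots + x^{d-1} M^{d-1} \subseteq K .
\]
We claim $xM \cdot N \subseteq N$. The only term needing care is $x^d M^d$. From the integral equation,
\[
x^d M^d \subseteq \sum_{j=1}^{d} c_j\, x^{d-j} M^d \subseteq \sum_{j=1}^{d} x^{d-j} I_0^j M^d = \sum_{j=1}^{d} x^{d-j} M^{d-j},
\]
using $I_0^j M^j = D$. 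The right-hand side lies in $N$.

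\emph{Step 3: determinant trick.} Pick any $y \in M$. Then $xy$ maps the finitely generated, faithful $D$-module $N \subseteq K$ into itself. By the Cayley--Hamilton (determinant) trick, $xy$ is integral over $D$.

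This is where the main obstacle lies: we need $D$ to be integrally closed. Pr\"ufer domains are integrally closed, and this can be cited from the references (\cite{FontanaHuckabaPapick}). It can also be seen directly: a Pr\"ufer domain's localizations at primes are valuation rings, and valuation rings are integrally closed. Granting this, $xy \in D$ for every $y \in M$, so $xM \subseteq D$. Hence
\[
x \in x D = x M I_0 \subseteq I_0 ,
\]
which completes the proof.

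An alternative route is to localize: $D_{\mathfrak p}$ is a valuation ring for every maximal ideal $\mathfrak p$. In a valuation ring every finitely generated ideal is principal, say $aD_{\mathfrak p}$, and the argument of Lemma~\ref{lem.Z-ideals-integrally-closed} (dividing by $a^d$ and using integral closedness of $D_{\mathfrak p}$) gives $x \in I_0 D_{\mathfrak p}$. Since this holds for every maximal $\mathfrak p$, we get $x \in I_0$. I would use whichever of the two routes matches the cited characterizations best.
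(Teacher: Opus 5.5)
Your proof is correct, but it takes a different route from the paper.

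\textbf{The paper's route.} After the same reduction to a finitely generated $J\subseteq I$, the paper sets $L:=J+xD$. It reads off from the integral equation that $L^r=JL^{r-1}$, and cancels the invertible ideal $L^{r-1}$ to get $L=J$, so $x\in J$. No determinant trick and no normality of $D$ are needed.

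\textbf{Your route.} You invert $I_0$ itself and show that $xM$ stabilizes the finitely generated module $N$. You then deduce that every element of $xM$ is integral over $D$, and need $D$ to be integrally closed to finish. You should say why $N$ is finitely generated: from $1=\sum_i a_im_i$ with $a_i\in I_0$, $m_i\in M$, one gets $M=\sum_i Dm_i$.

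\textbf{What each approach buys.} Normality of Pr\"ufer domains is an extra input that the paper avoids. It is standard, and one short proof of it is exactly the cancellation trick the paper uses: for $u$ integral over $D$, the fractional ideal $D[u]$ satisfies $D[u]^2=D[u]$, hence $D[u]=D$. So for Pr\"ufer domains the paper's single cancellation is the more economical and self-contained argument.

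On the other hand, your Steps 2 and 3 prove something the paper's argument does not: in any integrally closed domain, every invertible ideal is integrally closed. Since nonzero principal ideals are invertible, this also covers the fact that principal ideals of normal domains are integrally closed. The paper cites that fact from Huneke--Swanson for Corollary~\ref{cor.normal-principal-quotient}. The paper's argument does not apply there as written, because it requires $J+xD$, not merely $J$, to be invertible.

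Your localization alternative is also valid. It rests, however, on the heavier characterization of Pr\"ufer domains by valuation-ring localizations.
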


\begin{proof}
This is part of \cite[Theorem XII.3.2]{LomQui}, but we give
a proof for the sake of completeness.

Let $I$ be an ideal of a Pr\"ufer domain $D$, and let $x\in D$ be integral
over $I$. We must show that $x \in I$.

Choose an equation
\begin{align}
x^r+c_1x^{r-1}+\cdots+c_rx^0=0
\qquad \text{with } c_j\in I^j
\label{pf.Pruefer.eq1}
\end{align}
(since $x$ is integral over $I$).
Only finitely many elements of $I$ are needed to express all the $c_j$ as
sums of products of $j$ elements of $I$. Let $J\subseteq I$ be the finitely
generated ideal generated by these elements. Then $c_j\in J^j$ for every
$j$, so $x$ is integral over $J$.

Set $L:=J+xD$. If $J=0$, then \eqref{pf.Pruefer.eq1} gives $x^r=0$, whence
$x=0$ since $D$ is a domain; thus the claim is clear. Hence we may assume
that $J\neq0$. Then both $J$ and $L$ are nonzero finitely generated ideals,
and therefore invertible. Moreover, \eqref{pf.Pruefer.eq1} yields
\[
x^r\in JL^{r-1}.
\]
Any other product of $r$ generators of $L=J+xD$ already contains a factor
from $J$ and thus belongs to $JL^{r-1}$ as well.
Hence, $L^r \subseteq JL^{r-1}$. Since the opposite inclusion
is obvious, we thus have shown that
\[
L^r=JL^{r-1}.
\]
Since $L^{r-1}$ is invertible, we may cancel it and obtain $L=J$. In
particular, $x\in L=J\subseteq I$. Thus $I$ is integrally closed.
\end{proof}

\begin{corollary}
\label{cor.Prufer-quotient}
Let $D$ be a Pr\"ufer domain, let $I$ be an ideal of $D$, and let $A$
be a $1$-principled matrix over $D/I$. Then $A^m$ is $1$-principled for every
$m\in\mathbb Z$.
\end{corollary}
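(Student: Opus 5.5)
This corollary is a direct specialization of Theorem~\ref{thm.main2}. The only thing to check is its hypothesis: the ideal $I$ of $D$ must be integrally closed. That is exactly the content of Lemma~\ref{lem.Prufer-ideals-integrally-closed}, which says that every ideal of a Pr\"ufer domain is integrally closed. So the proof will apply Lemma~\ref{lem.Prufer-ideals-integrally-closed} to the given ideal $I$ and then invoke Theorem~\ref{thm.main2} with $R := D/I$.

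The steps are as follows. First, since $D$ is a Pr\"ufer domain, Lemma~\ref{lem.Prufer-ideals-integrally-closed} shows that $I$ is integrally closed in $D$. Second, Theorem~\ref{thm.main2} applies to the commutative ring $D$, the integrally closed ideal $I$, and the $1$-principled matrix $A$ over $R = D/I$. It yields that $A^m$ is $1$-principled for every $m \in \ZZ$.

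The negative powers need no separate treatment. Theorem~\ref{thm.main2} already covers all $m \in \ZZ$: through Theorem~\ref{thm.integrally-closed-quotient-strong2}, the matrix $A$ is $1$-nullcyclic, and then Corollary~\ref{cor.strong-integer-powers-principled2} handles negative exponents via Lemma~\ref{lem.1-nullcyclic-nilpotent-part}~\textbf{(c)}.

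I do not expect any real obstacle; the argument is a two-line combination of earlier results. The only point requiring a moment's care is degenerate ideals. For $I = 0$, Lemma~\ref{lem.Prufer-ideals-integrally-closed} still applies, and its proof covers this case because a domain is reduced. For $I = D$, the quotient is the zero ring, where every statement holds trivially, and Theorem~\ref{thm.main2} covers this case anyway.
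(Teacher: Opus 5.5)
Your proposal is correct and is exactly the paper's proof: Lemma~\ref{lem.Prufer-ideals-integrally-closed} shows that $I$ is integrally closed, and then Theorem~\ref{thm.main2} applies directly. Your extra remarks on negative exponents and on the degenerate ideals $I=0$ and $I=D$ are accurate but not needed.
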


\begin{proof}
Lemma~\ref{lem.Prufer-ideals-integrally-closed} shows that $I$ is
integrally closed; thus, we can apply
Theorem~\ref{thm.main2}.
\end{proof}

The reader can easily state and prove an analogue of
Corollary~\ref{cor.Prufer-quotient} with ``$1$-principled'' replaced
by ``principled''.

\begin{remark}
Valuation domains are precisely the local Pr\"ufer domains
(see \cite[\S XII.3]{LomQui}). Thus quotients of
valuation domains are included in Corollary~\ref{cor.Prufer-quotient}.
\end{remark}

\subsection{Normal domains}

Recall that a normal domain is a domain that is integrally closed in its
fraction field. Every principal ideal of a normal domain is integrally closed;
see \cite[Proposition 1.5.2]{HunekeSwanson}. Therefore
Theorem~\ref{thm.main2} yields the following.

\begin{corollary}
\label{cor.normal-principal-quotient}
Let $D$ be a normal domain, let $f\in D$ be a nonzero nonunit, and let $A$ be a
$1$-principled matrix over $D/fD$. Then $A^m$ is $1$-principled for every
$m\in\mathbb Z$.
\end{corollary}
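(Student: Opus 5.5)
The plan is to reduce Corollary~\ref{cor.normal-principal-quotient} directly to Theorem~\ref{thm.main2}, applied with $I = fD$. The only thing to check is that the principal ideal $fD$ is integrally closed in $D$. The paper cites \cite[Proposition 1.5.2]{HunekeSwanson} for this, but I would also sketch the short argument so the corollary is self-contained in the same spirit as Lemma~\ref{lem.Z-ideals-integrally-closed}.

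\textbf{Integral closedness of $fD$.} Let $x\in D$ be integral over $fD$. Then there is an equation
\[
x^r+c_1x^{r-1}+\cdots+c_r=0 \qquad \text{with } c_j\in (fD)^j=f^jD,
\]
and we write $c_j=f^jb_j$ with $b_j\in D$. Since $D$ is a domain and $f\neq 0$, we may set $y:=x/f$ in the fraction field $K$ of $D$. Dividing the equation by $f^r$ gives
\[
y^r+b_1y^{r-1}+\cdots+b_r=0 .
\]
So $y$ is integral over $D$. Because $D$ is normal, this forces $y\in D$, and hence $x=fy\in fD$. Thus $\overline{fD}=fD$.

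This is the same rescaling trick as in the proof of Lemma~\ref{lem.Z-ideals-integrally-closed}, with normality replacing the rational root argument. The only point needing care is that division by $f$ is legitimate, and this is exactly where $f\neq0$ and the domain hypothesis enter.

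\textbf{Conclusion.} Applying Theorem~\ref{thm.main2} with $I=fD$ and $R=D/fD$ shows that $A^m$ is $1$-principled for every $m\in\ZZ$. The nonunit hypothesis only ensures that $D/fD$ is a nonzero ring and plays no role in the argument.

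There is no real obstacle here. The one step of substance is the integral closedness of principal ideals, and the argument above disposes of it in a few lines.
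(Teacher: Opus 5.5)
Your proposal is correct and matches the paper's route: the paper also observes that the principal ideal $fD$ is integrally closed in the normal domain $D$ (citing \cite[Proposition 1.5.2]{HunekeSwanson}) and then applies Theorem~\ref{thm.main2} with $I = fD$. Your rescaling argument with $y = x/f$ is a valid self-contained proof of that cited fact, in the same spirit as Lemma~\ref{lem.Z-ideals-integrally-closed}.
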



\end{document}